\newtheorem{lemma}{\bf Lemma}
\newtheorem{prop}{\bf Proposition}
\newtheorem{cor}{\bf Corollary}
\newtheorem{thm}{\bf Theorem}
\long\def\comment#1{{}}
\begin{document}
\noindent 

\title{Inhomogeneous Jacobi matrices on trees}
\author{Ryszard Szwarc}
\date{}

\address{R.~Szwarc, Institute of Mathematics, University of Wroc\l aw, pl.\ Grunwaldzki 2/4, 50-384 Wroc\l aw, Poland}
\email{szwarc@math.uni.wroc.pl}
\thanks{The author acknowledges support by Polish grant NCN 2013/11/B/ST1/02308}

\keywords{Jacobi matrix, tree, essential selfadjointness}
\subjclass[2000]{Primary 47B36}

\begin{abstract}
We study  Jacobi matrices on trees  with one end at inifinity. We show that the defect indices cannot be greater than 1 and give criteria for essential selfadjointness.  We construct certain polynomials associated with matrices which mimic orthogonal polynomials in the classical case. Nonnegativity of Jacobi matrices is studied as well.
\end{abstract}
\maketitle
\section{Introduction}
The aim of the paper is to study  a special class of symmetric unbounded operators and their spectral properties. These are Jacobi operators defined on one sided  tree. They are immediate generalizations of classical Jacobi matrices which act on sequences $\{u(n)\}_{n=0}^\infty$ by the rule
$$ Ju(n) =\lambda_nu(n+1) +\beta_nu(n) +\lambda_{n-1}u(n-1),\quad n\ge 0,$$ where
$\{\lambda_n\}_{n=0}^\infty$ and $\{\beta_n\}_{n=0}^\infty$ are sequences of positive and real numbers, respectively, with the convention $u(-1)=\lambda_{-1}=0.$
These matrices are closely related with the set of polynomials defined recursively by
\begin{equation}xp_n(x) =\lambda_np_{n+1}(x) +\beta_np_n(x) +\lambda_{n-1}p_{n-1}(x),\quad n\ge 0, 
\end{equation}
with $p_{-1}=0, \ p_0=1.$

In case the coefficients of the matrix are bounded, the matrix $J$ represents selfadjoint operator on $\ell^2(\mathbb{N}_0).$ If $E(x)$ denotes the resolution of identity associated to $J,$ then the polynomials $p_n(x)$ are orthonormal with respect to the measure
$d\mu(x)=d(E(x)\delta_0,\delta_0),$ where $\delta_0$ is the sequence taking value 1 at $n=0$ and vanishing elsewhere and $(u,v)$ denotes the standard inner product in $\ell^2(\mathbb{N}_0).$ The measure $\mu$ has bounded support. 

When the coefficients are unbounded the operator $J$ is well defined on the domain $D(J)$ consisting of sequences with finitely many nonzero terms. In that case,  if this operator is essentially selfadjoint 
then again the polynomials $p_n$ are orthonormal with respect to the measure
$d\mu(x)=d(E(x)\delta_0,\delta_0),$ except that this measure has unbounded support.
Moreover there is a unique orthogonality measure  for polynomials $p_n.$
By a classical theorem, if  the operator $J$ is not essentially selfadjoint, there
are many measures $\mu$ on the real line so that 
the polynomials belong to $L^2(\mu),$ i.e.
$$ \int\limits_{-\infty}^{\infty}x^{2n}\,d\mu(x)<\infty $$ and the polynomials $p_n$ are orthogonal with respect to the inner product $$(f,g)=\int\limits_{-\infty}^\infty f(x)\overline{g(x)}\,d\mu (x).$$

Therefore essential selfadjointness is a crucial property that distinguishes between the so called determinate and indeterminate cases. Intuitively the unbounded matrix $J$ is essentially selfadjoint when the coefficients have moderate growth. But the converse is not true in general. For the classical theory of Jacobi matrices, orthogonal polynomials and moment problems we address the reader to \cite{akh}, \cite{ch}, \cite{st}, and to \cite{simon} for a modern treatment.

In a recent paper \cite{ksz}  homogeneous Jacobi matrices on one sided homogeneous trees were studied.  Two types of homogeneous trees were considered. One of them was the tree with infinitely many origin points (on level 0) and one end at infinity. 
\vspace{0.5cm}
\begin{center}
\begin{tikzpicture}
\draw[fill=black] (-2,1) circle (1.5pt);
\draw[fill=black] (0,1) circle (1.5pt);
\draw[fill=black] (2,1) circle (1.5pt);
\draw[fill=black] (0,0) circle (1.5pt);
\draw (0,0)--(-2,1);
\draw (0,0)--(-0,1);
\draw (0,0)--(2,1);
\draw[fill=black] (-2.5,2) circle (1.5pt);
\draw[fill=black] (-2,2) circle (1.5pt);
\draw[fill=black] (-1.5,2) circle (1.5pt);
\draw (-2.5,2)--(-2,1);
\draw (-2,2)--(-2,1);
\draw (-1.5,2)--(-2,1);
\draw[fill=black] (-0.5,2) circle (1.5pt);
\draw[fill=black] (0,2) circle (1.5pt);
\draw[fill=black] (0.5,2) circle (1.5pt);
\draw (-0.5,2)--(0,1);
\draw (0,2)--(0,1);
\draw (0.5,2)--(0,1);
\draw[fill=black] (1.5,2) circle (1.5pt);
\draw[fill=black] (2,2) circle (1.5pt);
\draw[fill=black] (2.5,2) circle (1.5pt);
\draw (1.5,2)--(2,1);
\draw (2,2)--(2,1);
\draw (2.5,2)--(2,1);
\node at (3.2,2) {\ \ .\ .\ .};
\node at (2.6,1) {\ \ .\ .\ .};
\node at (0.8,0) {\ \ .\ .\ .};
\end{tikzpicture}
\end{center}

The tree $\Gamma$ consists of vertices on levels from zero to infinity. Every vertex $x$ on level $n$ is connected with a unique vertex $x'$ on level $n+1$ and $d$ vertices $x_1,\ldots, x_d$ on level $n-1$ for $n\ge 1$ like in the figure below:

\begin{center}
\begin{tikzpicture}
\draw[fill=black] (-1,2) circle (1.5pt);
\draw[fill=black] (1,1) circle (1.5pt);
\draw[fill=black] (0,2) circle (1.5pt);
\draw[fill=black] (3,2) circle (1.5pt);
\draw[fill=black] (1,0) circle (1.5pt);
\node at (1.2,0.9) {$x$};
\node at (1,-0.25) {$x'$};
\node at (-1,2.2) {$x_1$};
\node at (0,2.2) {$x_2$};
\node at (3,2.2) {$x_d$};
\node at (1.5,2.2) {$...$};
 \draw (-1,2)--(1,1);
 \draw (0,2)--(1,1);
 \draw (3,2)--(1,1);
 \draw (1,0)--(1,1);
\end{tikzpicture}
\end{center}

The Jacobi matrices were defined on $\ell^2(\Gamma )$ where $\Gamma$ denotes the set of all vertices of the tree. 
The formula is as follows (when the degree equals $d+1$)
$$ Jv(x) =\lambda_n v(x') +\beta_n v(x) +\lambda_{n-1} [v(x_1)+v(x_2)+\ldots +v(x_d)],$$
where $n$ denotes the level of the vertex $x$ counting from above.

An interesting phenomenon occured. It turned out that the  operator $J$ defined on
functions $\{v(x)\}_{x\in \Gamma},$ with finitely many nonzero terms, is always essentially selfadjoint, regardless of the growth of the coefficients $\lambda_n$ and $\beta_n.$ 
For example the operator $J$ with coefficients $\lambda_n=(n+1)^2$ and $\beta_n=0$ is not essentially selfadjoint when considered as the classical Jacobi matrix on $\ell^2(\mathbb{N}_0).$ But it is essentialy selfadjoint when it acts on $\ell^2(\Gamma).$

Moreover its spectrum is discrete and consists of the zeros of all the polynomials $p_n$ associated with classical Jacobi matrix with coefficients $\sqrt{d}\,\lambda_n$ and $\beta_n,$ i.e. satisfying
$$xp_n(x) =\sqrt d\,\lambda_np_{n+1}(x) +\beta_np_n(x) +\sqrt d\,\lambda_{n-1}p_{n-1}(x),\quad n\ge 0.$$

Our aim is to study inhomogeneous Jacobi matrix on that tree. This means we do not require that the coefficients of the matrix are constant on each level of the tree. With every vertex $x$ we associate a positive number $\lambda_x$ and a real number $\beta_x.$  We are going to study operators of the form 
$$ Jv(x)=\lambda_{x}v(x')+\beta_xv(x) +\lambda_{x_1}v(x_1)+
\lambda_{x_2}v(x_2)+\ldots +\lambda_{x_d}v(x_d).$$

One of the main differences between the classical case and the case of the tree $\Gamma$ is that the eigenvalue equation
\begin{equation}zv(x)=\lambda_x v(x')+\beta_x v(x)+\lambda_{x_1}v(x_1)+
\lambda_{x_2}v(x_2)+\ldots +\lambda_{x_d}v(x_d)
\end{equation}
 cannot be solved recursively, unlike the equation
$$zv(n)=\lambda_nv(n+1)+\beta_nv(n)+\lambda_{n-1}v(n-1).$$
This not a coincidence as we are going to show that the equation (2) may not admit nonzero solutions for real values of $z$ (cf.  Proposition 5).  But we will show the equation has a nonzero solution for every nonreal $z$ (Corollary 3).

Actually, when we give up homogeneity of the matrix $J,$ we can as well give up homogoneity of the tree. Again $x'$ is the only vertex one level up connected with $x$ and $x_1,x_2,..., x_d$ are all vertices one level down connected with $x.$ The number $d$ may now vary if we do not assume homogeneity of the tree.

The  operator $J$ is  symmetric on $\ell^2(\Gamma)$ with respect to the natural inner product
$$ (u,v)=\sum_{x\in \Gamma} u(x)\overline{v(x)}.$$
We are interested in studying the essential selfadjointness of the matrix $J.$ It turns out that unlike in homogeneous case, the matrix $J$ may not be essentially selfadjoint. However the defect indices cannot be greater than 1 (Corollary 3). We derive certain criteria assuring essential selfadjointness. For example the analog of Carleman condition holds (see Theorem 6). Moreover we relate essential selfadjointness of $J$  with essential selfadjointness of the classical Jacobi matrix $J_0$ obtained from $J$ by restriction to an infinite path of the tree (see Theorem 4 and Remark following it). 

Classical Jacobi matrices are associated with orthogonal polynomials through the formula (1). In case of the tree $\Gamma$ there is no natural way of defining polynomials associated with Jacobi matrices on $\Gamma,$ since (as was mentioned above) the eignevalue equation may be not solvable. In Section 3 we define certainpolynomials associated with $J.$   We prove that they have real and simple zeros. Also we show  interlacing property for roots of two consecutive polynomials. We also prove that the zeros of these polynomials describe the spectrum of restriction of $J$ to finite subtrees of $\Gamma.$ However, unlike in the classical case, there is no natural orthogonality relation between these polynomials.

In Section 5 we give a criterion for nonnegativity of the Jacobi matrix $J$ on $\Gamma.$ In the classical case the Jacobi matrix $J$ is positive definite if and only if $(-1)^np_n(0)>0$ for every $n ,$ where $p_n$ are the orthogonal polynomials associated with $J.$  In case of  tree $\Gamma$ we do not have solutions of eigenvalue problem at our disposal or orthogonal polynomials. Therefore we had to find another way of getting the result. The nonnegativity of the matrix $J$ proved to be a useful tool in construction of a Jacobi matrix on $\Gamma$ for which the eigenvalue equation (2) does not admit solutions for some real values.

\section{Definitions and basic properties}
We will consider a tree $\Gamma$ with one end at infinity. Its vertices are located on levels from zero to infinity.  Every vertex $x$ on level $k\ge 0$ is connected with a unique vertex $x'$ on level $k+1.$ Moreover, when $k\ge 1$ the vertex $x$ is connected  with a finite number of vertices $y$ on level $k-1.$ This set will be denoted by $N_x.$ Let $\ell(x)$ denote the level of vertex $x.$

For a given vertex $x$ let $\Gamma_x$ denote the finite subtree containing the vertex $x$ together with vertices  $y$ so that $\ell(y)<\ell(x)$ and connected with $x$ by a path.  Denote $\Gamma_x'=\Gamma_x\cup\{x'\}.$ 

 Define $\mathcal{F}(\Gamma)$ to be the set of all complex valued functions with finite support on $\Gamma.$ Let 
 $$\delta_x(y)=\begin{cases}
 1 & y=x,\\
 0 & y\neq x.
 \end{cases}$$ 
 Consider the operator $J$  acting on $\mathcal{F}(\Gamma)$ according to the rule
\begin{align}
 &J\delta_x=\lambda_{x}\delta_{x'} +\beta_{x}\delta_x+\sum_{y\in N_{x}}\lambda_y\delta_y,& \ell(x)> 1,\\
 &J\delta_x=\lambda_{x}\delta_{x'} +\beta_{x}\delta_x,&\ell(x)=0,
\end{align}
 where
$\lambda_x$ are positive constants while $\beta_x$ are real ones.
Let $S$ be the operator acting by the rule
$$S\delta_x=\lambda_x \delta_{x'}.$$ Then the adjoint operator $S^*$ is given by
$$S^*\delta_x=\begin{cases}\displaystyle\sum_{y\in N_{x}}\lambda_y\delta_y,& l(x)>0,\\
S^*\delta_x=0, &l(x)=0.
\end{cases}
$$
Let $M$ be a  multliplication operator defined by
$$M\delta_x=\beta_x\delta_x.$$
Then 
\begin{equation}\label{s}
J=S+S^*+M.
\end{equation}
In particular $J$ is a symmetric linear operator.

We will study formal eignefunctions of the operator $J,$ i.e. functions $v$ defined on $\Gamma$ satisfying
$$ Jv=zv.$$ Evaluation at the vertex $x$ gives that equivalently we have the recurrence relation
\begin{equation}\label{recur}
zv(x)=\begin{cases}
\lambda_xv(x')+\beta_xv(x)+\displaystyle\sum_{y\in N_x}\lambda_yv(y) & \ell(x)\ge 1,\\
\lambda_xv(x')+\beta_xv(x) &\ell(x)=0.
\end{cases}
\end{equation}
In order to simplify the notation set $N_x=\emptyset$ if $\ell(x)=0.$ Then
(\ref{recur}) takes the form
\begin{equation}\label{recurc}
zv(x)=
\lambda_xv(x')+\beta_xv(x)+\displaystyle\sum_{y\in N_x}\lambda_yv(y)
\end{equation}
\begin{lemma}
Fix a vertex $x\in \Gamma.$ 
Assume there exists $0\neq v\in \mathcal{F}(\Gamma_{x'})$ and $z\notin \mathbb{R}$ such that $Jv(y)=zv(y)$ for
$y\in \Gamma_{x}.$ Then $v(x')\neq 0.$
\end{lemma}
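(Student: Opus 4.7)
The plan is to argue by contradiction: suppose $v(x')=0$. Then, since by hypothesis $v$ is supported in $\Gamma_{x'}=\Gamma_x\cup\{x'\}$, the vanishing at $x'$ forces the support of $v$ to lie entirely in the finite set $\Gamma_x$. In particular $v\in\mathcal{F}(\Gamma)$, so the inner product $\langle Jv,v\rangle$ makes sense and $J$ is symmetric on its entire finite-support domain, so $\langle Jv,v\rangle\in\mathbb{R}$.

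The key computation is then to evaluate $\langle Jv,v\rangle$ directly. Because $v$ is supported in $\Gamma_x$, only terms indexed by $y\in\Gamma_x$ contribute:
\[
\langle Jv,v\rangle=\sum_{y\in\Gamma}(Jv)(y)\overline{v(y)}=\sum_{y\in\Gamma_x}(Jv)(y)\overline{v(y)}.
\]
At each such $y$ the eigenvalue hypothesis $Jv(y)=zv(y)$ applies, so the sum equals $z\|v\|^2$. Since $v\neq 0$ and $z\notin\mathbb{R}$, the quantity $z\|v\|^2$ is not real, contradicting the reality of $\langle Jv,v\rangle$ forced by the symmetry of $J$. Hence $v(x')\neq 0$.

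I do not expect a real obstacle here; the only point requiring mild care is making sure that the sum defining $\langle Jv,v\rangle$ really does ignore what $Jv$ does at vertices outside $\Gamma_x$ (where the eigenvalue equation is not assumed). This is precisely why the argument needs $v$ to vanish off $\Gamma_x$, which is exactly the contrapositive assumption $v(x')=0$ combined with the support condition built into $v\in\mathcal{F}(\Gamma_{x'})$.
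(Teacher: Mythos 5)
Your argument is correct and is essentially the paper's proof: the paper notes that once $v(x')=0$ the restriction $w=v|_{\Gamma_x}$ is a nonzero eigenvector of the finite symmetric truncation $J_x=P_xJP_x$ with eigenvalue $z$, forcing $z\in\mathbb{R}$, and your quadratic-form computation $\langle Jv,v\rangle=z\|v\|^2\in\mathbb{R}$ is just the standard inline proof of that same fact. The only cosmetic difference is that you avoid introducing the truncated operator explicitly; no gap either way.
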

\begin{proof}
Assume for a contradiction that $v(x')=0.$ Let $J_x$ denote the finite Jacobi matrix defined on $\Gamma_{x}$ obtained from $J$ by
truncation. Namely $J_x=P_xJP_x,$ where $P_x$ denotes the orthogonal projection from $\ell^2(\Gamma)$ to $\ell^2(\Gamma_x).$ Let $w$ denote the truncation of $v$ to $\Gamma_{x}.$ Since $v(x')=0$ we have
$J_xw=zw.$ Moreover $w\neq 0.$ Therefore $z$ must be a real number, as $J_x$ is a finite dimensional symmetric linear operator.
\end{proof}
\begin{lemma} Fix a vertex $x\in \Gamma.$ 
Assume there exists $0\neq v\in \mathcal{F}(\Gamma_x)$ and $z\notin \mathbb{R}$ such that  $Jv(y)=zv(y)$ for
$y\in \Gamma_{x}\setminus\{x\}.$ Then
$$zv(x)\neq \beta_{x}v(x) +\sum_{y\in N_{x}}\lambda_y v(y).$$
\end{lemma}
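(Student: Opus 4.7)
The key observation is that the expression $\beta_x v(x)+\sum_{y\in N_x}\lambda_y v(y)$ is exactly $Jv(x)$ when $v$ is extended by zero outside $\Gamma_x$: since $v\in\mathcal{F}(\Gamma_x)$ and $x'\notin\Gamma_x$, the term $\lambda_x v(x')$ in the recurrence (\ref{recurc}) vanishes. So the negation of the conclusion is the assertion that $Jv(x)=zv(x)$, i.e.\ that $v$ already satisfies the eigenvalue equation at the top vertex $x$ as well.

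My plan is therefore to argue by contradiction, mirroring the argument used in Lemma 1. Suppose $zv(x)=\beta_x v(x)+\sum_{y\in N_x}\lambda_y v(y)$. Combine this with the hypothesis $Jv(y)=zv(y)$ for $y\in\Gamma_x\setminus\{x\}$: for such $y$ one has $\ell(y)<\ell(x)$, hence $y'\in\Gamma_x$ and $N_y\subset\Gamma_x$, so the recurrence (\ref{recurc}) only involves values of $v$ at vertices of $\Gamma_x$. Let $w$ be the truncation (equivalently, restriction) of $v$ to $\Gamma_x$ and let $J_x=P_xJP_x$ be the finite Jacobi matrix on $\ell^2(\Gamma_x)$. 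Because $v$ vanishes outside $\Gamma_x$, one checks directly that $(J_x w)(y)=(Jv)(y)$ for every $y\in\Gamma_x$, so the combined equations give $J_x w=zw$ with $w\neq 0$.

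Now $J_x$ is a finite-dimensional symmetric operator, so all its eigenvalues are real, contradicting $z\notin\mathbb{R}$. This yields the desired inequality.

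The only place where anything subtle happens is the verification that $J_x$ and $J$ agree on $\Gamma_x$ when applied to the zero-extension of $w$; this is where the hypothesis $v\in\mathcal{F}(\Gamma_x)$ (hence $v(x')=0$) is essential, and it is exactly what makes the assumed equality at $x$ serve as the missing eigenvalue equation at the ``top'' vertex of the truncated matrix. After that, the reduction to the finite symmetric case finishes the argument just as in Lemma 1; no further obstacle is anticipated.
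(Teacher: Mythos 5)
Your proof is correct and follows essentially the same route as the paper: the paper formally extends $v$ by zero to $x'$ and invokes Lemma 1, whose proof is exactly the truncation to the finite symmetric matrix $J_x=P_xJP_x$ that you carry out directly. Inlining that step changes nothing of substance, and your verification that $J_xv=zv$ on all of $\Gamma_x$ (including at $x$, where the assumed equality supplies the missing eigenvalue equation) is exactly the point.
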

\begin{proof}
Assume for a contradiction that
$$zv(x)= \beta_{x}v(x) +\sum_{y\in N_{x}}\lambda_y v(y).$$
Define the function $u\in \mathcal{F}(\Gamma_x')$ by setting $u(y)=v(y)$ for $y\in \Gamma_x$ and $u(x')=0.$ Then
$Ju(y)=zu(y)$ for $y\in \Gamma_x.$ In view of Lemma 1 we get a contradiction.
\end{proof}
\begin{cor}
Assume there exists a function $v\neq 0$ on 
$\Gamma$ and $z\notin \mathbb{R}$ such that $Jv(x)=zv(x)$ for $x\in \Gamma.$ Then  $v$ does not vanish on $\Gamma.$
\end{cor}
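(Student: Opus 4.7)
The plan is a proof by contradiction. Suppose $v\not\equiv 0$ and $z\notin\mathbb R$ satisfy $Jv=zv$, yet $v(x_0)=0$ at some vertex $x_0$. I will use Lemma~1 to force $v$ to vanish on the entire finite subtree $\Gamma_{x_0}$, then propagate the zero upward along the unique ray from $x_0$ to the end at infinity, and finally exploit the fact that $\Gamma$ has only one end to conclude that $v$ vanishes identically.

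First I claim $v\equiv 0$ on $\Gamma_{x_0}$. If $\ell(x_0)=0$ this is vacuous; otherwise, fix any child $y\in N_{x_0}$ and consider the restriction of $v$ to the finite set $\Gamma_y'=\Gamma_y\cup\{x_0\}$. The global eigenvalue equation gives $Jv(w)=zv(w)$ for every $w\in\Gamma_y$. If this restriction were nonzero in $\mathcal{F}(\Gamma_y')$, Lemma~1 applied at the vertex $y$ would yield $v(x_0)=v(y')\neq 0$, a contradiction. Thus $v\equiv 0$ on $\Gamma_y$ for every child $y$ of $x_0$, and together with $v(x_0)=0$ this forces $v\equiv 0$ on $\Gamma_{x_0}$. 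The eigenvalue equation at $x_0$ then collapses to $0=\lambda_{x_0}v(x_0')$, so $v(x_0')=0$. Re-running the argument at $x_0'$ gives $v\equiv 0$ on $\Gamma_{x_0'}$, and by induction $v\equiv 0$ on $\Gamma_{x_0^{(k)}}$ for every $k\ge 0$, where $x_0^{(k)}$ denotes the $k$-fold iterated parent of $x_0$.

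Because $\Gamma$ has a single end at infinity, the upward ray of any vertex eventually merges with that of $x_0$; in particular every $w\in\Gamma$ lies in $\Gamma_{x_0^{(k)}}$ for sufficiently large $k$. Hence $\bigcup_k\Gamma_{x_0^{(k)}}=\Gamma$ and $v\equiv 0$ on $\Gamma$, contradicting $v\neq 0$. The delicate point is the opening step: the natural impulse to apply Lemma~1 at $x_0$ itself yields only $v(x_0')\neq 0$, which is not a contradiction. One must instead apply it at each child $y$ of $x_0$, so that the conclusion $v(y')\neq 0$ reads $v(x_0)\neq 0$ and collides with the standing assumption; the remaining upward induction and exhaustion of $\Gamma$ are routine once this pivot is made.
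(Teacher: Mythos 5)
Your proof is correct and follows essentially the same route as the paper's: use Lemma~1 (in contrapositive form, applied at the children of the vanishing vertex) to kill $v$ on the downward subtree, push the zero up to the parent via the recurrence, and exhaust $\Gamma$ by iterating along the upward ray. Your explicit remark that Lemma~1 must be invoked at each child $y$ of $x_0$ rather than at $x_0$ itself is a useful clarification of a step the paper states tersely, but it is not a different argument.
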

\begin{proof}
Assume for a contradiction that $v(x)=0$ for a vertex $x.$ By Lemma 1 we get that
 the function $v$ vanishes identically on $\Gamma_{x}.$ From the recurrence relation 
$$zv(x)=\lambda_{x}v(x') +\beta_{x}v(x)+\sum_{y\in N_{x}}\lambda_yv(y),$$
we get $v(x')=0.$ Therefore $v$ vanishes identically
on $\Gamma_{x'}.$ Applying the same procedure infinitely many times we achieve that $v$ vanishes at every vertex of $\Gamma.$
\end{proof}

\begin{lemma} For any nonreal number $z$ and any $x_0\in \Gamma$ with $l(x_0)\ge 1$ there exists a nonzero function $v$ defined on $\Gamma_{x_0}$ satisfying
\begin{equation}\label{eq} Jv(x)=zv(x),\quad x\in \Gamma_{x_0}\setminus\{x_0\}.
\end{equation}
Moreover the function $v$ cannot vanish and is unique up to a constant multiple.
\end{lemma}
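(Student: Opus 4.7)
The plan is to reduce the question on $\Gamma_{x_0}$ to a collection of finite symmetric eigenvalue problems on the subtrees $\Gamma_y$ rooted at the children $y\in N_{x_0}$, and then to climb the tree using Lemma~1 to rule out internal zeros. For existence and uniqueness, decompose $\Gamma_{x_0}\setminus\{x_0\}=\bigsqcup_{y\in N_{x_0}}\Gamma_y$, and for each $y\in N_{x_0}$ let $J_y$ denote the finite symmetric Jacobi matrix obtained by truncating $J$ to $\Gamma_y$, as in the proof of Lemma~1. For any function $v$ on $\Gamma_{x_0}$, writing $w_y=v|_{\Gamma_y}$, one has $Jv(x)=J_yw_y(x)$ whenever $x\in\Gamma_y\setminus\{y\}$ (all neighbours of $x$ already lie in $\Gamma_y$), while at the root $x=y$ the unique neighbour outside $\Gamma_y$ is the parent $x_0$, contributing $\lambda_y v(x_0)$. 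Hence the system $Jv(x)=zv(x)$ on $\Gamma_y$ is equivalent to
$$(J_y-z)w_y=-\lambda_y v(x_0)\,\delta_y.$$
Because $J_y$ is finite symmetric and $z\notin\mathbb{R}$, the operator $J_y-z$ is invertible, so $w_y$ is uniquely determined by $v(x_0)$. Thus the solution space is one-dimensional, parametrised by $v(x_0)$: the choice $v(x_0)=1$ produces a nonzero $v$, while $v(x_0)=0$ forces $w_y\equiv 0$ on every $\Gamma_y$ and hence $v\equiv 0$.

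For non-vanishing, suppose for contradiction that $v(y_0)=0$ for some $y_0\in\Gamma_{x_0}$. For each child $u\in N_{y_0}$, apply the contrapositive of Lemma~1 with the vertex $u$ (so $u'=y_0$) to the restriction of $v$ to $\Gamma_u'=\Gamma_u\cup\{y_0\}$: since $v(u')=v(y_0)=0$ and $Jv=zv$ on $\Gamma_u\subset\Gamma_{x_0}\setminus\{x_0\}$, we conclude that $v\equiv 0$ on $\Gamma_u'$. Taking the union over $u\in N_{y_0}$ yields $v\equiv 0$ on $\Gamma_{y_0}$. If $y_0=x_0$, the uniqueness step already gives $v\equiv 0$, contradicting $v\ne 0$. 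Otherwise the recurrence (\ref{recurc}) at $y_0$ collapses to $\lambda_{y_0}v(y_0')=0$, so $v(y_0')=0$; iterating along the finite path from $y_0$ up to $x_0$ we eventually obtain $v(x_0)=0$, which again forces $v\equiv 0$, a contradiction.

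The only delicate point is the boundary bookkeeping in the decomposition: one must carefully observe that, for every $x\in\Gamma_{x_0}\setminus\{x_0\}$, all neighbours of $x$ remain inside $\Gamma_{x_0}$, so that $J$ acts on each $\Gamma_y$ exactly as $J_y$ plus the single cross-boundary coupling $\lambda_y v(x_0)$ between $y$ and its parent $x_0$. Once this identification is made, existence and uniqueness reduce to inversion of a finite symmetric matrix and non-vanishing reduces to an inductive tree-climb powered by Lemma~1.
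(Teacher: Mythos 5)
Your proof is correct, and for the existence and uniqueness part it takes a genuinely different route from the paper's. The paper proceeds by induction on $l(x_0)$: it glues the solutions $v_i$ on the subtrees $\Gamma_{x_i}$, $x_i\in N_{x_0}$, by choosing multiples $c_iv_i$ so that the eigenvalue equation also holds at each $x_i$, and the existence of admissible $c_i$ and of the value $v(x_0)$ rests on Lemma~2, which guarantees that $zv_i(x_i)-\beta_{x_i}v_i(x_i)-\sum_{y\in N_{x_i}}\lambda_y v_i(y)\neq 0$; uniqueness then needs a separate small argument (otherwise $v-c\tilde v$ would vanish somewhere, contradicting Lemma~1). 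You instead observe that the system decouples into the inhomogeneous finite problems $(J_y-z)w_y=-\lambda_y v(x_0)\,\delta_y$, one for each $y\in N_{x_0}$, and invert $J_y-z$ directly, which is legitimate because $J_y$ is a finite symmetric matrix and $z\notin\mathbb{R}$. This makes the solution space visibly one-dimensional, parametrised by $v(x_0)$, so existence and uniqueness come simultaneously and Lemma~2 is not needed at this stage. Both arguments ultimately rest on the same fact --- a finite symmetric matrix has real spectrum --- but you package it as invertibility of the resolvent rather than as the non-vanishing statement of Lemma~2. Your non-vanishing argument (Lemma~1 applied at each child of a hypothetical zero $y_0$, then climbing the path from $y_0$ up to $x_0$ via the recurrence) mirrors the paper's proof of Corollary~1 and is sound, and your boundary bookkeeping (every neighbour of a vertex in $\Gamma_{x_0}\setminus\{x_0\}$ stays inside $\Gamma_{x_0}$, with the single cross-boundary coupling $\lambda_y v(x_0)$ at each root $y$) is right. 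The only thing your shortcut forgoes is that the paper reuses its inductive construction immediately afterwards to obtain Corollary~2 (a solution on all of $\Gamma$); but your uniqueness statement supplies the compatibility of the solutions on $\Gamma_{x_n}$ and $\Gamma_{x_{n+1}}$ needed there, so nothing is lost.
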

\begin{proof}
We will prove Lemma 3 by induction on the level $l(x_0).$ Assume $l(x_0)=1.$ Set $v(x_0)=1.$
Let $x\in N_{x_0}.$ Then $l(x)=0.$ We want to have
$$zv(x)=\lambda_xv(x_0)+\beta_xv(x).$$
Thus we may set
$$v(x)={\lambda_xv(x_0)\over z-\beta_x}.$$ In this way (\ref{eq}) is fulfilled.

Assume the conclusion is true for all vertices on the level $n. $ Let $l(x_0)=n+1.$
Consider vertices $x_1,x_2,\ldots, x_k\in N_{x_0}.$ Then $l(x_i)=n$ for $i=1,2,\ldots, k.$ By induction hypothesie, for every vertex $x_i$ there exists a nonzero function
$v_i$ defined on $\Gamma_{x_i}$ satisfying
$$Jv_i(x)=zv_i(x),\quad x\in \Gamma_{x_i}\setminus\{x_i\}.$$ We have
$$ \Gamma_{x_0}=\bigcup_{i=1}^k \Gamma_{x_i}\cup \{x_0\}.$$
We are going to define the function $v$ on $\Gamma_{x_0}$ in the following way: set 
$$ v(x)=c_iv_i(x), \quad {\rm for}\ x\in \Gamma_{x_i},$$ with $c_1=1.$
In this way we get  $$Jv(x)=zv(x),\quad x\in \Gamma_{x_i}\setminus \{x_i\},\quad i=1,2,\ldots, k.$$
In order to conclude the proof we must show that 
$$Jv(x_i)=zv(x_i),\quad i=1,2,\ldots, k.$$ 
Thus we want to have
$$ zc_iv_i(x_i)= \lambda_{x_i}v(x_0)+\beta_{x_i}c_iv_i(x_i) +\sum_{y\in N_{x_i}}\lambda_y c_iv_i(y),$$
i.e.
\begin{equation}\label{nonzero1}
\lambda_{x_i}v(x_0)=c_i\left (zv_i(x_i)-\beta_{x_i}v_i(x_i) -\sum_{y\in N_{x_i}}\lambda_y v_i(y)\right ).
\end{equation}
The expression in the brackets on the right hand side is nonzero for every $i=1,2,\ldots, k$ by Lemma 2. Therefore  (\ref{nonzero1}) is satisfied
for an appropriate choice of the value $v(x_0)$ and   nonzero constants $c_2,c_3,\ldots, c_k.$

By Lemma 1 the function $v$ cannot vanish. Moreover if  there was another function $\tilde{v}$ satisfying the conclusion of Lemma 3,
then $v-c\tilde{v}$ would also satisfy the conclusion and would vanish for an approprate choice of the constant $c.$ Thus $v=c\tilde{v}.$
\end{proof}
By the proof of Lemma 3 we get the following.
\begin{cor} For any nonreal number $z$ there exists a nonzero function $v$ so that
$$Jv(x)=zv(x),\quad x\in \Gamma.$$ The function $v$ cannot vanish and is unique up to a constant multiple.
\end{cor}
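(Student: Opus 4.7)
The plan is to exhaust $\Gamma$ by the nested finite subtrees produced by walking up the unique infinite path and assemble the eigenfunctions given by Lemma 3 on each of them into a single function on $\Gamma$.

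Fix a vertex $y_0$ of level $1$ and recursively define $y_{n+1}:=y_n'$, so that $(y_n)_{n\ge 0}$ lies on the unique ray toward the end at infinity. The subtrees $\Gamma_{y_n}$ are nested, $\Gamma_{y_0}\subset\Gamma_{y_1}\subset\cdots$, and their union is all of $\Gamma$: for any vertex $x$, the chain $x,x',x'',\ldots$ must eventually meet the ray $(y_n)$, since $\Gamma$ has only one end, so $x\in\Gamma_{y_n}$ for all sufficiently large $n$.

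For each $n$, apply Lemma 3 at $x_0=y_n$ to obtain a nowhere vanishing function $v_n$ on $\Gamma_{y_n}$ satisfying $Jv_n(y)=zv_n(y)$ for every $y\in\Gamma_{y_n}\setminus\{y_n\}$. Normalize by dividing by $v_n(y_0)$ (which is nonzero because $v_n$ does not vanish) so that $v_n(y_0)=1$. The key compatibility step is the following: for any $y\in\Gamma_{y_n}\setminus\{y_n\}$, both $y'$ and all vertices of $N_y$ still lie in $\Gamma_{y_n}$, so the identity $Jv_{n+1}(y)=zv_{n+1}(y)$, valid on $\Gamma_{y_{n+1}}\setminus\{y_{n+1}\}\supset\Gamma_{y_n}\setminus\{y_n\}$, involves only values of the restriction $v_{n+1}|_{\Gamma_{y_n}}$. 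Thus this restriction satisfies the hypotheses of Lemma 3 at $y_n$; by the uniqueness clause it is a scalar multiple of $v_n$, and agreement at $y_0$ forces $v_{n+1}|_{\Gamma_{y_n}}=v_n$.

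With compatibility in hand, define $v(x):=v_n(x)$ for any $n$ with $x\in\Gamma_{y_n}$; this is unambiguous. To verify $Jv=zv$ at an arbitrary $x\in\Gamma$, pick $n$ large enough that $x\in\Gamma_{y_n}\setminus\{y_n\}$. Then $v$ coincides with $v_n$ at $x$ and at every neighbor of $x$ appearing in the recurrence, so $Jv(x)=Jv_n(x)=zv_n(x)=zv(x)$. Non-vanishing of $v$ is inherited from the $v_n$. For uniqueness, any other eigenfunction $\tilde v$ for $z$ on $\Gamma$ restricts on each $\Gamma_{y_n}$ to a nonzero function satisfying the hypotheses of Lemma 3 at $y_n$, hence equals $c_n v_n$ on $\Gamma_{y_n}$; the nesting of the subtrees forces the constants to coincide, giving $\tilde v=cv$ on all of $\Gamma$.

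The only nontrivial step I anticipate is the compatibility argument above: one must confirm that restricting $v_{n+1}$ to $\Gamma_{y_n}$ still yields a function to which Lemma 3 applies at the new apex $y_n$. This is a small but essential tree-geometric check, ensuring that the recurrence at interior vertices of $\Gamma_{y_n}$ never references $v_{n+1}$ at vertices lying outside $\Gamma_{y_n}$.
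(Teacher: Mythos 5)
Your proof is correct and is essentially the argument the paper intends when it states that the corollary follows ``by the proof of Lemma 3'': one exhausts $\Gamma$ by the nested subtrees $\Gamma_{y_n}$ along the upward ray and glues the solutions, with the uniqueness clause of Lemma 3 supplying the compatibility. The two checks you flag (that $\bigcup_n\Gamma_{y_n}=\Gamma$ and that the recurrence at a vertex of $\Gamma_{y_n}\setminus\{y_n\}$ references only vertices of $\Gamma_{y_n}$) are exactly the right points to verify, and both go through.
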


{\bf Remark.} In Chapter 5 we are going to show that the conclusion of Corollary 2 may not be true for real numbers $z.$ Observe that for classical Jacobi matrices (when $\Gamma=\mathbb{N}_0$)  the recurrence relation
\begin{equation}\label{orth}zv(n)=
\lambda_nv(n+1)+\beta_nv(n)+\lambda_{n-1}v(n-1)
\end{equation}
($\lambda_{-1}=0$) 
has nonzero solutions for any $z\in \mathbb{C}.$
\section{Polynomials and zeros}
The classical Jacobi matrices are related to orthogonal polynomials. Namely setting $v_0=1$ in (\ref{orth}) gives that $v(n)=p_n(z),$ where $p_n$ is a polynomial of order $n,$ with real coefficients.  The question arises if Jacobi matrices on trees are connected to polynomials, as well. In general we cannot expect that the solution of $Jv=zv$ will satisfy that $v(t)=P_t(z),$ where $P_t$ is a polynomial for every $t\in \Gamma.$ But we may expect that $P_t(z)$ is a polynomial for $t$ in a subtree $\Gamma_x$ for some $x\in \Gamma.$
\begin{prop} Let $x\in \Gamma.$ There exists a nonzero solution $v_x$ of $Jv_x=zv_x,$ so that for any $t\in \Gamma'_x$ the function
$v_x(t)=P_{x,t}(z)$ is a polynomial with real coefficients and positive leading coefficient.  Moreover
if $t\in \Gamma'_{y}\subset\Gamma'_x$ then the polynomial $P_{x,t}$ is divisible by $P_{{y},t}.$
\end{prop}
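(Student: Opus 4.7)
My plan is to construct $v_x$ and the polynomials $P_{x,t}$ by induction on the level $\ell(x)$, adapting the gluing construction from the proof of Lemma 3 but treating $z$ as a formal variable. For the base case $\ell(x)=0$, one has $\Gamma_x=\{x\}$, so the only equation from $Jv_x=zv_x$ on $\Gamma_x$ reads $zv_x(x)=\lambda_x v_x(x')+\beta_x v_x(x)$; setting $v_x(x):=\lambda_x$ forces $v_x(x')=z-\beta_x$, and both $P_{x,x}=\lambda_x$ and $P_{x,x'}=z-\beta_x$ trivially have real coefficients and positive leading coefficients, with divisibility vacuous.

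For the inductive step with $\ell(x)=n+1$ and $N_x=\{y_1,\dots,y_k\}$, the inductive hypothesis gives for each $y_i$ a polynomial family $\{P_{y_i,t}\}_{t\in\Gamma'_{y_i}}$; since $y_i'=x$, in particular $P_{y_i,x}(z)$ is a polynomial. I glue the $v_{y_i}$'s together by setting
$$c_i(z):=\prod_{j\ne i}P_{y_j,x}(z),\qquad v_x(t):=c_i(z)\,v_{y_i}(t)\quad\text{for}\ t\in\Gamma'_{y_i}.$$
The definitions agree at $t=x$ (each gives $\prod_j P_{y_j,x}(z)$), so $P_{x,x}$ is unambiguous; since $c_i(z)$ does not depend on $t$, the relation $Jv_{y_i}=zv_{y_i}$ on $\Gamma_{y_i}$ transfers to $Jv_x=zv_x$ on $\Gamma_x\setminus\{x\}$, and the equation at $x$ itself \emph{defines}
$$\lambda_x P_{x,x'}(z):=(z-\beta_x)P_{x,x}(z)-\sum_{i=1}^k\lambda_{y_i}P_{x,y_i}(z),$$
a polynomial with real coefficients. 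Positivity of the leading coefficient at $t=x'$ follows by carrying the invariant $\deg P_{y_i,x}=\deg P_{y_i,y_i}+1$ through the induction: this makes $zP_{x,x}(z)$ the dominant term on the right, and dividing by $\lambda_x>0$ preserves positivity.

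Divisibility then falls out of the construction: for $y\in\Gamma_x\setminus\{x\}$, $y$ lies in a unique $\Gamma_{y_i}$ and the path from $y$ up to $x$ runs through $y_i$, so $\Gamma'_y\subset\Gamma'_{y_i}$. On $\Gamma'_{y_i}$ the formula $P_{x,t}=c_i(z)P_{y_i,t}(z)$ shows $P_{y_i,t}\mid P_{x,t}$, and combined with the inductive divisibility $P_{y,t}\mid P_{y_i,t}$ this yields $P_{y,t}\mid P_{x,t}$. The main hurdle throughout is the consistency of the piecewise definition at the vertex $x$, which forces the precise choice of $c_i$; the degree tracking and divisibility are then routine induction bookkeeping.
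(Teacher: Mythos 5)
Your proof is correct and follows the same skeleton as the paper's: induction on $\ell(x)$, rescaling each child's solution by a polynomial factor so that the pieces agree at $x$, then reading $P_{x,x'}$ off the recurrence at $x$. The genuine difference is the choice of multiplier. The paper takes $v_x=\bigl(\mathrm{LCM}\{P_{y,x}:y\in N_x\}/P_{y_i,x}\bigr)\,v_{y_i}$, so that $P_{x,x}=\mathrm{LCM}\{P_{y,x}:y\in N_x\}$, whereas you take the full product of the siblings' polynomials, so that $P_{x,x}=\prod_{j}P_{y_j,x}$. For the Proposition as stated both choices work: your polynomials have real coefficients and positive leading coefficients, the divisibility is immediate from $P_{x,t}=c_i(z)P_{y_i,t}(z)$ on $\Gamma_{y_i}'$ together with the inductive divisibility, $v_x\neq 0$ for $z\notin\mathbb{R}$ because each $P_{y_j,x}$ is nonvanishing off the real axis (Lemma 1), and your explicit degree bookkeeping (the invariant $\deg P_{y,x}=\deg P_{y,y}+1$ propagating to $\deg P_{x,x'}=\deg P_{x,x}+1$) actually justifies the positivity of the leading coefficient of $P_{x,x'}$ more transparently than the paper does at this stage. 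What the $\mathrm{LCM}$ buys, and the product gives up, is minimality: if two siblings $y_1,y_2\in N_x$ share a root $r$ of $P_{y_1,x}$ and $P_{y_2,x}$, your $P_{x,x}$ acquires $r$ as a multiple root, so the polynomials you construct would falsify Theorem 1 (simplicity of the zeros of $P_{x,x}$) and break the eigenvector count in Theorem 2, both of which rely on the identity (\ref{vx}). Two small points to tighten: the Proposition asserts that $v_x$ solves $Jv_x=zv_x$ on all of $\Gamma$, so you should note that your glued function is the restriction of the global solution $c_1(z)v_{y_1}$ — the pieces $c_iv_{y_i}$ coincide globally, not merely at $x$, since the $v_{y_i}$ are proportional by Corollary 2 and agreement at the nonvanishing value at $x$ forces agreement of the constants — and in the base case you should likewise anchor $v_x$ to the Corollary 2 solution rather than defining it only on $\{x,x'\}$.
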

\begin{proof}
We will use induction on the level $l(x).$ Let $l(x)=0.$ By Corollary 2 there is a nonzero solution $v$ of $Jv=zv.$ Then $v(x)\neq 0.$  
Let
$$v_x={v(x)}^{-1}v.$$ 
Hence $v_x(x)=1.$ By $Jv_x=zv_x$ evaluated at $x$ we get 
$$v_x(x')={z-\beta_x\over \lambda_x}.$$

Assume now the conclusion is valid for vertices on the level $n.$ Let $l(x)=n+1.$
By induction hypothesis, for any $y\in N_x$ there is a nonzero solution $v_y$ so  that
$P_{y,t}(z)$ is a polynomial with real coefficients for $t\in \Gamma'_y.$ 
In particular the polynomial $v_y(x)=P_{y,x}(z)$ has real coefficients.
Moreover by Lemma 1 the polynomial $P_{y,x}(z)$ cannot vanish for $z\notin \mathbb{R}.$
Fix $y_1\in N_x$ and let
\begin{equation}v_x={{\rm LCM}\{P_{y,x}(z)\,:\,y\in N_x\}\over P_{y_1,x}(z)} \ v_{y_1}.\label{v}
\end{equation}
Since $Jv_{y_1}=zv_{y_1}$ we get $Jv_x=zv_x.$
Moreover $v_x$ does not vanish for $z\notin \mathbb{R}.$
Since $v_{y_1}(x)=P_{y_1,x}(z)$ we obtain
\begin{equation}P_{x,x}(z)=v_x(x)={\rm LCM}\{P_{y',x}(z)\,:\,y'\in N_x\}.\label{vx}
\end{equation} 
Since the value $v_x(x)$ determines the solution, the function $v_x$ does not depend on the choice of $y_1\in N_x.$
Thus the formula (\ref{v}) and the above reasoning is valid for any choice of  $y\in N_x.$
Hence
\begin{equation}v_x={{\rm LCM}\{P_{\tilde{y},x}(z)\,:\,\tilde{y}\in N_x\}\over P_{{y},x}(z)} \ v_{{y}},\qquad {y}\in N_x.\label{vv}
\end{equation}
 By (\ref{vv}) and by induction hypothesis the value $v_x(t)$ is a polynomial in $z$ for any $t\in\displaystyle \bigcup_{y\in N_x}\Gamma_y\cup \{x\}=\Gamma_x.$ By the recurrence relation also the value $v_x(x')$ is a polynomial. Moreover by (\ref{vv})
the polynomial $v_x(t)$ is divisible by $v_y(t)$ for any $y\in N_x$ and $t\in \Gamma'_y.$ This implies the last part of the conclusion.  
\end{proof}
\noindent{\bf Remarks} 
The formulas (\ref{vx}) and (\ref{vv}) imply that for $y\in N_x$ and $t\in \Gamma_y'$ we have
\begin{equation}\label{vxx}
P_{x,t}(z)=P_{x,x}(z){P_{y,t}(z)\over P_{y,x}(z)}.
\end{equation} 
 Let $y\in \Gamma_x.$ Then $y$ and $x$ are connected in $\Gamma_x$ by a path
$y=y_0,y_1,\ldots, y_n=x.$ By iterating (\ref{vxx})  we get
$$ P_{x,y}(z)
={P_{y_n,y_n}(z)\over P_{y_{n-1},y_n}(z)}\cdot {P_{y_{n-1},y_{n-1}}(z)\over P_{y_{n-2},y_{n-1}}(z)}\cdot \ldots\cdot  {P_{y_{1},y_1}(z)\over P_{y_{0},y_{1}}(z)}P_{y_0,y_0}(z).
$$
Let $y\in \Gamma_{\tilde{x}}\subset \Gamma_x.$ Then $\tilde{x}=y_k$ for some $k,$ $0\le k\le n.$ Hence
$$P_{x,y}(z)={P_{y_n,y_n}(z)\over P_{y_{n-1},y_n}(z)}\cdot {P_{y_{n-1},y_{n-1}}(z)\over P_{y_{n-2},y_{n-1}}(z)}\cdot \ldots\cdot  {P_{y_{k+1}}(z)\over P_{y_{k},y_{k+1}}(z)} \,P_{\tilde{x},y}(z).$$
These formulas and (\ref{vx}) imply that the polynomial $P_{x,y}(z)$ can be described in terms of the polynomials of the form $P_{t,t'}(z)$ for $t\in \Gamma_x.$ 
\begin{cor} Let $z\notin \mathbb{R}.$ Let $\{x_n\}_{n=0}^\infty$ be an infinite path in $\Gamma $ so that $l(x_n)=n.$
Let $v$ be a nonzero solution of $Jv(x)=zv(x)$ so that  $v(x_0)=1.$   Then
for any vertex $x\in \Gamma_{x_n}$ we have $$v(x)={a_{n,x}(z)\over b_{n}(z)},$$  
 where $a_{n,x}(z)$ and $b_{n}(z)$ are polynomials with real coefficients.
Moreover  the polynomial $b_{n+1}$ is divisible by $b_{n}.$ 
\end{cor}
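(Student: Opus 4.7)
The plan is to deduce the corollary directly from Proposition 1 by identifying the given solution $v$ with an appropriate rescaling of the polynomial solution $v_{x_n}$ supplied by that proposition, and then to read off the divisibility claim from the final ``$P_{y,t}$ divides $P_{x,t}$'' clause of Proposition 1. The uniqueness statement in Corollary 2 is what glues these identifications together as $n$ varies.

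More precisely, for each $n\ge 0$ I would apply Proposition 1 to the vertex $x_n$, obtaining a nonzero solution $v_{x_n}$ of $Jv_{x_n}=zv_{x_n}$ on $\Gamma$ whose values $v_{x_n}(t)=P_{x_n,t}(z)$ are polynomials in $z$ with real coefficients for every $t\in\Gamma'_{x_n}$. Since the path $x_0,x_1,\ldots,x_n$ lies inside $\Gamma_{x_n}$, in particular $x_0\in\Gamma'_{x_n}$, so $P_{x_n,x_0}(z)$ is a well-defined real-coefficient polynomial. By the uniqueness part of Corollary 2, the solution $v$ must be a scalar multiple of $v_{x_n}$, and the normalisation $v(x_0)=1$ pins the scalar down to
$$v=\frac{1}{P_{x_n,x_0}(z)}\,v_{x_n}.$$
This denominator does not vanish because $v_{x_n}$ is itself a nonzero solution of $Jw=zw$ at the nonreal point $z$, hence (again by Corollary 2) never vanishes. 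For any $x\in\Gamma_{x_n}\subset\Gamma'_{x_n}$ this yields
$$v(x)=\frac{P_{x_n,x}(z)}{P_{x_n,x_0}(z)},$$
so I would simply set $a_{n,x}(z):=P_{x_n,x}(z)$ and $b_n(z):=P_{x_n,x_0}(z)$.

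For the divisibility $b_n\mid b_{n+1}$, note that $x_{n+1}=x_n'$, so
$$\Gamma'_{x_n}=\Gamma_{x_n}\cup\{x_{n+1}\}\subset\Gamma_{x_{n+1}}\subset\Gamma'_{x_{n+1}};$$
in particular $x_0\in\Gamma'_{x_n}\subset\Gamma'_{x_{n+1}}$. The last sentence of Proposition 1, applied with $y=x_n$, $x=x_{n+1}$, $t=x_0$, then gives $P_{x_n,x_0}\mid P_{x_{n+1},x_0}$, which is exactly $b_n\mid b_{n+1}$.

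There is essentially no hard step here: once Proposition 1 is in hand, the corollary is extracted by combining the polynomial structure of $v_{x_n}$ on $\Gamma'_{x_n}$ with the uniqueness (up to a scalar) of the formal eigenfunction at nonreal $z$. The only point worth double-checking is that the denominator $P_{x_n,x_0}(z)$ is genuinely nonzero for $z\notin\mathbb{R}$, which is immediate from Corollary 2 applied to $v_{x_n}$ itself.
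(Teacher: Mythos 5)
Your proposal is correct and follows essentially the same route as the paper: apply Proposition 1 at the vertex $x_n$, normalise by dividing by $P_{x_n,x_0}(z)$ (which is nonzero for $z\notin\mathbb{R}$), take $a_{n,x}=P_{x_n,x}$ and $b_n=P_{x_n,x_0}$, and deduce $b_n\mid b_{n+1}$ from the divisibility clause of Proposition 1 applied to $\Gamma'_{x_n}\subset\Gamma'_{x_{n+1}}$. Your write-up is in fact slightly more explicit than the paper's, which leaves the appeal to uniqueness (Corollary 2) and the non-vanishing of the denominator implicit.
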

\begin{proof}
Consider the subtree $\Gamma_{x_n}.$ Let $x\in \Gamma_{x_n}.$ By Proposition 1 there is a solution $v_n$ so that 
$v_n(x)$ and $v_n(x_0)$ are polynomials with real coefficients. Then 
$$v(x)={v_n(x)\over v_n(x_0)}$$  satisfies $v(x_0)=1.$ By the last part of Proposition 1 the polynomial $v_{n+1}(x_0)$ is divisible by $v_n(x_0).$
\end{proof}
\begin{thm}
The polynomials $P_{x,y}(z),$ $y\in \Gamma_x',$  have only real zeros. Moreover 
for any $x\in \Gamma$ 
the zeros of $P_{x,x}$ and $P_{x,x'}$ are single, and
the zeros of $P_{x,x}$ interlace with the zeros of $P_{x,x'},$ i.e. if $x_1<x_2<\ldots< x_n$  denote the zeros of $P_{x,x'},$ then $P_{x,x}$ has $n-1$ zeros $y_1<y_2<\ldots <y_{n-1}$ and
$$x_1<y_1<x_2<y_2<\ldots < y_{n-1}<x_n.$$
\end{thm}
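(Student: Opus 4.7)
The plan is induction on the level $\ell(x)$. The base case $\ell(x)=0$ is immediate: $P_{x,x}\equiv 1$ and $P_{x,x'}(z)=(z-\beta_x)/\lambda_x$, so $P_{x,x'}$ has one simple real zero and the interlacing condition is vacuous.

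For the inductive step, the central object is the rational function
$$F(z):=\frac{\lambda_x P_{x,x'}(z)}{P_{x,x}(z)}.$$
Evaluating (\ref{recurc}) on $v_x$ at the vertex $x$ and using (\ref{vxx}) to rewrite $P_{x,y}(z)/P_{x,x}(z)=P_{y,y}(z)/P_{y,x}(z)$ for $y\in N_x$ gives
$$F(z)=(z-\beta_x)-\sum_{y\in N_x}\lambda_y\,\frac{P_{y,y}(z)}{P_{y,x}(z)}.$$
For each $y\in N_x$ one has $y'=x$, so the induction hypothesis says that the zeros of $P_{y,y}$ and $P_{y,x}$ are real, simple, and strictly interlacing; both polynomials have positive leading coefficients (Proposition 1), and a direct degree count in (\ref{vxx}) shows $\deg P_{y,x}=\deg P_{y,y}+1$. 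The standard partial-fraction argument then produces
$$\frac{P_{y,y}(z)}{P_{y,x}(z)}=\sum_{i}\frac{c_i^{(y)}}{z-\alpha_i^{(y)}}\qquad\text{with all } c_i^{(y)}>0,$$
and summing over $y$ while collapsing repeated poles writes $F$ in the Herglotz form $F(z)=z-\beta_x+\sum_k d_k/(\gamma_k-z)$ with real $\gamma_k$ and strictly positive $d_k$.

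Such a real rational function is classically analysed as follows. On each component of $\mathbb{R}\setminus\{\gamma_k\}$ one has $F'(z)=1+\sum_k d_k/(\gamma_k-z)^2>0$, and $F$ diverges to $\pm\infty$ at both endpoints. Consequently $F$ has one simple real zero in each of the $K+1$ open intervals determined by its $K$ poles, and the zeros of $F$ strictly interlace the poles. It remains to match these poles and zeros with the zeros of $P_{x,x}$ and $P_{x,x'}$. The pole set $\{\gamma_k\}$ is by construction $\bigcup_{y\in N_x}\{\alpha_i^{(y)}\}$, i.e.\ the zero set of $L={\rm LCM}\{P_{y,x}:y\in N_x\}=P_{x,x}$; since each $P_{y,x}$ has simple real zeros, each $\gamma_k$ is also a simple zero of $P_{x,x}$. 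Comparing multiplicities at each $\gamma_k$ forces $\gcd(P_{x,x},P_{x,x'})$ to be constant, and the interlacing transfers verbatim to $P_{x,x}$ and $P_{x,x'}$.

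For an arbitrary $y\in\Gamma_x$ formula (\ref{vxx}) gives $P_{x,y}=(L/P_{y_0,x})\cdot P_{y_0,y}$, where $y_0\in N_x$ is the neighbour of $x$ with $y\in\Gamma_{y_0}'$; both factors are polynomials with only real zeros by induction and the LCM construction, which completes the real-zero part of the theorem. The main obstacle in the plan is the identification step in the preceding paragraph --- ruling out cancellation between $P_{x,x}$ and $P_{x,x'}$; this is controlled precisely by the strict positivity of the Herglotz residues $d_k>0$, which guarantees that $F$ has a genuine simple pole at each zero of $P_{x,x}$.
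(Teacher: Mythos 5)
Your argument is correct, and although it keeps the paper's skeleton --- induction on $\ell(x)$, the recurrence (\ref{rec1}), and the identity $P_{x,y}/P_{x,x}=P_{y,y}/P_{y,x}$ from (\ref{vxx}) --- the step that actually produces the interlacing is genuinely different. The paper argues pointwise on the polynomials: at each root $r$ of $P_{x,x}$ it determines the sign of $P_{x,x'}(r)$ through the limit (\ref{lim}) with an infinitesimal shift $r+\varepsilon$, then counts sign changes between consecutive roots and treats the extreme roots separately. You instead convert the induction hypothesis into positivity of the residues of $P_{y,y}/P_{y,x}$ and read off that $F=\lambda_xP_{x,x'}/P_{x,x}$ is a rational Herglotz function $z-\beta_x+\sum_k d_k/(\gamma_k-z)$ with $d_k>0$, so that strict monotonicity between poles gives exactly one simple zero per gap. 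The main dividend of your route is that the most delicate point of the paper's proof --- ruling out a common root of $P_{x,x}$ and $P_{x,x'}$, which the paper handles via the nonvanishing of the limit in (\ref{lim}) and the simplicity of the roots of the $P_{y,x}$ --- becomes automatic, since a strictly positive residue at $\gamma_k$ forces a genuine pole of $F$ there; you also get $\deg P_{x,x'}=\deg P_{x,x}+1$, the positivity of the leading coefficient, and the simplicity of all zeros of $P_{x,x'}$ in one stroke from $F(z)\sim z$ at infinity together with the zero count. The price is the appeal to the classical fact that interlacing simple zeros with positive leading coefficients yield positive residues $P_{y,y}(\alpha_i)/P_{y,x}'(\alpha_i)>0$; that is standard and fine to cite, but note one small misattribution: the relation $\deg P_{y,x}=\deg P_{y,y}+1$ does not come from a degree count in (\ref{vxx}) --- it is part of the induction hypothesis (an $n$-versus-$(n-1)$ count of interlacing zeros), exactly as the paper derives it.
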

\begin{proof}
We will use induction on $l(x).$ Let $l(x)=0.$ Then $P_{x,x}=1$ and
$P_{x,x'}={(z-\beta_x)/ \lambda_x}.$ Assume the conclusion is valid for $l(x)=n-1.$ 
Let $l(x)=n.$ By the recurrence relation we have
\begin{equation}\label{rec1}\lambda_xP_{x,x'}(z)=(z-\beta_x)P_{x,x}(z)-\sum_{j=1}^k\lambda_{y_j}P_{x,y_j}(z),
\end{equation} where $N_x=\{y_1,y_2,\ldots,y_k\}.$ By (\ref{vxx}), with $t=y_j,$  we get
\begin{equation}\label{deg}P_{x,y_j}(z)=P_{x,x}(z)\,{P_{y_j,y_j}(z)\over P_{y_j,x}(z)}.
\end{equation}
By induction hypothesis the zeros of $P_{y_j,y_j}(z)$ are real and single and interlace with the zeros of $P_{y_j,x}(z)$ for any $j.$ This implies
$$\deg P_{y_j,x}=\deg P_{y_j,y_j}+1. $$
In view of (\ref{rec1}) and (\ref{deg}) we get
$$\deg P_{x,x'}=\deg P_{x,x}+1.$$
 Let $r$ be a  root of $P_{x,x}(z).$ We are going to study the sign of 
$P_{x,x'}(r)$
making use of (\ref{rec1}). 
If $P_{y_j,x}(r)\neq 0,$ then (\ref{deg}) implies $P_{x,y_j}(r)=0.$  
But since $P_{x,x}(r)=0$ then $P_{y_{j_0},x}(r_1)=0$ for some $j_0,$ by (\ref{vx}). Consider the quantity
$$P_{x,y_{j_0}}(r+\varepsilon)=P_{x,x}(r+\varepsilon)  \,{P_{y_{j_0},y_{j_0}}(r+
\varepsilon)\over P_{y_{j_0},x}(r+\varepsilon)},$$ where $\varepsilon>0$ is infinitesimally small. We have
$$ {P_{y_{j_0},y_{j}}(r+
\varepsilon)\over P_{y_{j_0},x}(r+\varepsilon)}>0,$$ as the polynomials
$P_{y_{j_0},y_{j_0}}(z)$ and $P_{y_{j_0},x}(z)$ have the same number of roots to the right of $r+\varepsilon,$ by induction hypothesis and by the fact that the leading coefficients are positive. Consider the limit
\begin{eqnarray}P_{x,y_{j_0}}(r)&=&\lim_{\varepsilon\to 0^+} P_{x,x}(r+\varepsilon)  \,{P_{y_{j_0},y_{j_0}}(r+
\varepsilon)\over P_{y_{j_0},x}(r+\varepsilon)}\nonumber\\
&=&P_{y_{j_0},y_{j_0}}(r)
\lim_{\varepsilon\to 0^+} {P_{x,x}(r+\varepsilon) \over P_{y_{j_0},x}(r+\varepsilon)} .\label{lim}
\end{eqnarray}
The polynomials $P_{y,x},$ for $y\in N_x,$ have single roots by induction hypothesis. Thus the limit in the right hand side of (\ref{lim}) is nonzero in view of 
(\ref{vx}).
Since  $P_{y_{j_0},y_{j_0}}(r)\neq 0$ (by induction hypothesis) 
we get that $P_{x,j_0}(r)\neq 0.$
Hence the sign of the limit is determined by the sign of $P_{x,x}(r+\varepsilon).$ 
By plugging $z=r$ into (\ref{rec1}) we get that $P_{x,x'}(r)$ and $P_{x,x}(r+\varepsilon)$ have opposite signs.

Consider now two consecutive roots $r_1<r_2$ of $P_{x,x}(z).$ The signs of
$P_{x,x}(r_1+\varepsilon)$ and $P_{x,x}(r_2+\varepsilon)$ are opposite. Therefore the signs of $P_{x,x'}(r_1)$ and $P_{x,x'}(r_2)$ are also opposite. Thus $P_{x,x'}(z)$ must vanish in the interval $(r_1,r_2).$ 

Assume now that $r$ is the largest root of $P_{x,x}(z).$ Then $P_{x,x}(r+\varepsilon)>0$ for small positive $\varepsilon.$ By the above reasoning we have $P_{x,x'}(r)<0,$ which means that $P_{x,x'}$ must vanish somewhere to the right of $r,$ as the leading coefficient is positive.
Similarly if $r$ is the smallest root of $P_{x,x}(z)$ then the signs of $P_{x,x'}(r)$ and $P_{x,x}(r+\varepsilon)$ are opposite. But since the degree of $P_{x,x'}$ is by one greater than the degree of $P_{x,x}(z)$ and the leading coefficients are positive, we get that $P_{x,x'}$ must vanish below $r.$
\end{proof}
For $x\in \Gamma$ let $J_x$ denote the truncation of the Jacobi matrix $J$ to the subtree $\Gamma_x,$ i.e. the matrix with the parameters $\lambda^x_y, $ $\beta^x_y$ so that
$$\lambda_y^x=\begin{cases}
\lambda_y & {\rm for}\ y\in \Gamma_x\setminus\{x\}\\
0 & {\rm for}\ y\notin \Gamma_x\setminus\{x\}
\end{cases}\hspace{2cm} \beta_y^x=\begin{cases}
\beta_y & {\rm for}\ y\in \Gamma_x\\
0 & {\rm for}\ y\notin \Gamma_x
\end{cases} $$ 
\begin{thm} Let $x\in \Gamma.$ 
Let $r$ belong to the spectrum of $J_x.$ Then $r$ satisfies at least one of the two conditions 
\begin{enumerate}
\item[(a)] $P_{x,x'}(r)=0.$
\item[(b)] There exist $y\in \Gamma_x$ and
$y_1,y_2\in N_y$ so that $P_{y_1,y}(r)=P_{y_2,y}(r)=0.$
\end{enumerate} 
\end{thm}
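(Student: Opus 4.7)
The strategy is contrapositive: assume $r$ lies in the spectrum of $J_x$ with some nonzero eigenfunction $w$ and that condition (b) fails, and derive (a). First I rewrite the eigenvalue equation: if I extend $w$ to $\Gamma_x'$ by $w(x')=0$, then $J_xw=rw$ on $\Gamma_x$ is equivalent to $Jw(y)=rw(y)$ for all $y\in\Gamma_x$, because the extra term $\lambda_x w(x')$ that appears in $J$ but not in $J_x$ vanishes under the boundary condition. So it suffices to show that any nonzero function $w$ on $\Gamma_x'$ with $w(x')=0$ and $Jw=rw$ on $\Gamma_x$ forces $P_{x,x'}(r)=0$.

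The core of the argument is the following inductive claim: if (b) fails at $r$, then for every $y\in\Gamma_x$ the space of functions $v$ on $\Gamma_y'$ satisfying $Jv=rv$ on $\Gamma_y$ is one-dimensional, spanned by the specialization $v_y^{(r)}(t):=P_{y,t}(r)$ of the polynomial solution produced in Proposition 1. The induction is on $\ell(y)$. The base $\ell(y)=0$ is the single equation $\lambda_yv(y')=(r-\beta_y)v(y)$, whose solution space is visibly one-dimensional. For the inductive step, write $N_y=\{z_1,\ldots,z_k\}$; the restriction of $v$ to each $\Gamma_{z_i}'$ is, by the inductive hypothesis, a scalar multiple $\alpha_i\,v_{z_i}^{(r)}$, and the requirement that $v(y)$ be consistent when viewed from each branch becomes the linear system $\alpha_i\,P_{z_i,y}(r)=v(y)$ for $i=1,\ldots,k$. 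The failure of (b) means that at most one index $i_0$ has $P_{z_{i_0},y}(r)=0$. A short case split (no such $i_0$, or exactly one) shows that this system admits a one-parameter family of consistent tuples $(v(y),\alpha_1,\ldots,\alpha_k)$; the equation $Jv(y)=rv(y)$ then determines $v(y')$, giving a one-dimensional solution space on $\Gamma_y'$.

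To complete the induction I need $v_y^{(r)}$ itself to be nonzero, so that it actually spans the one-dimensional space. Theorem 1 provides this: the zeros of $P_{y,y}$ and $P_{y,y'}$ strictly interlace, so they share no real root, and hence $v_y^{(r)}(y)$ and $v_y^{(r)}(y')$ cannot both vanish. Specializing the claim to $y=x$, the eigenfunction $w$ extended by $w(x')=0$ must be a nonzero scalar multiple of $v_x^{(r)}$, and the boundary condition then forces $P_{x,x'}(r)=v_x^{(r)}(x')=0$, which is (a).

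The main obstacle is the inductive step: I must check that under the failure of (b) the solution-space dimension is exactly one—neither zero (prevented by the nonvanishing of $v_y^{(r)}$, which ultimately rests on the interlacing from Theorem 1) nor strictly greater than one (prevented by the fact that at most one child $z_{i_0}$ satisfies $P_{z_{i_0},y}(r)=0$, so at most one ``free'' parameter appears in the consistency system). Both ingredients—the interlacing of Theorem 1 and the precise hypothesis that (b) fails—are essential for pinning the dimension down to one.
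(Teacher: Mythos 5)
Your argument is correct, but it is not the route the paper takes, so a comparison is in order. The paper argues in the opposite direction: it first shows that every $r$ satisfying (a) or (b) actually \emph{is} an eigenvalue of $J_x$ (for (a) by taking the limit $\varepsilon\to 0$ of the polynomial solutions $P_{x,\cdot}(r+i\varepsilon)$; for (b) by gluing two branch solutions into a function supported on $\Gamma_{y_1}\cup\Gamma_{y_2}$), and then counts the linearly independent eigenvectors so produced, verifying by induction on $\ell(x)$ that their span has dimension $\#\Gamma_x$; since $J_x$ is symmetric on a space of that dimension, no other spectral points can exist. Your contrapositive argument instead pins down the solution space directly: under the failure of (b) you show by induction on the level that the space of solutions of $Jv=rv$ on $\Gamma_y$ (as functions on $\Gamma_y'$) is one-dimensional and spanned by $t\mapsto P_{y,t}(r)$, using the consistency system $\alpha_i P_{z_i,y}(r)=v(y)$ at each vertex and the interlacing of Theorem 1 to rule out the degenerate case $P_{y,y}(r)=P_{y,y'}(r)=0$; the boundary condition $w(x')=0$ then forces (a). The trade-off is clear: your proof is shorter and establishes exactly the stated implication without any dimension bookkeeping, while the paper's proof simultaneously yields the converse inclusion (every number in (a) or (b) is in the spectrum) together with the multiplicities, which your argument does not address. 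Both proofs ultimately rest on the same two inputs, namely the simplicity and interlacing of the zeros from Theorem~1 and the ${\rm LCM}$/divisibility structure of Proposition~1; your Case 2 analysis (exactly one child $z_{i_0}$ with $P_{z_{i_0},y}(r)=0$, forcing $v(y)=0$ and leaving $\alpha_{i_0}$ as the single free parameter) is sound, and is in effect the local statement whose failure the paper exploits when it builds the extra eigenvectors $u_{y_1,y_i}$ in case (b).
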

\begin{proof}
First we will show that the numbers described in the theorem belong to the spectrum of $J_x.$
Assume $P_{x,x'}(r)=0.$ By Theorem 1 we have $P_{x,x}(r)\neq 0.$ By Lemma 2 for any nonreal $z$ there is a solution $v_x$ of the equation $Jv_x=zv_x$ so that $v_x(y)=P_{x,y}(z)$  for $y\in \Gamma_x'.$
Let $$u(y)=\lim_{\varepsilon\to 0} P_{x,y}(r+i\varepsilon),\quad y\in \Gamma_x.$$ Then
$u$ satisfies $J_xu=ru.$ Moreover $u$ is nonzero as $u(x)=P_{x,x}(r)\neq 0.$ 

Assume now that there exist $y\in \Gamma_x$ and $y_1,y_2\in N_y$ so that $P_{y_1,y}(r)=P_{y_2,y}(r)=0.$ By the above reasoning there are two nonzero solutions $u_1,u_2,$ defined on $\Gamma_{y_1},\ \Gamma_{y_2},$ respectively,  of the equations $J_{y_1}u_1=ru_1$ and  $J_{y_2}u_2=ru_2$ and $u_1(y_1)\neq 0,$ $u_2(y_2)\neq 0.$ Consider the function $u_{y_1,y_2}$ defined on $\Gamma_y$ as follows
$$u_{y_1,y_2}(t)=\begin{cases}\ \ \lambda_{y_2} u_2(y_2)u_1(t) & t\in \Gamma_{y_1},\\
-\lambda_{y_1}u_1(y_1)u_2(t) & t\in \Gamma_{y_2},\\
0 & t\notin \Gamma_{y_1}\cup\Gamma_{y_2}.
\end{cases}$$
Then $u_{y_1,y_2}\neq 0$ and $J_xu_{y_1,y_2}=Ju_{y_1,y_2}=ru_{y_1,y_2}.$

Assume that for $y\in \Gamma_x$ and $y_1,y_2,\ldots, y_n\in N_y$ we have
$$P_{y_1,y}(r)=P_{y_2,y}(r)=\ldots = P_{y_n,y}(r)=0.$$
Then the eigenvectors $$u_{y_1,y_2},\ldots, u_{y_1,y_n}$$ are linearly independent, as the support of $u_{y_1,y_i}$ coincides with $\Gamma_{y_1}\cup \Gamma_{y_i}.$
Hence the dimension of the space spanned by these eigenvectors is at least $n-1.$

In the previous part of the proof we have constructed eigenvectors corresponding to the set of numbers described in the theorem. We will calculate the dimension of the space spanned by these eigenvectors. The proof will be complete if the dimension  coincides with the dimension of the space $\ell^2(\Gamma_x),$ i.e. with $\#\Gamma_x.$ 
We will use induction with respect to $\ell(x).$
Assume the conclusion is valid for $l(x)=n.$
Let $\ell(x)=n+1.$ Denote
$N_x=\{y_1,y_2,\ldots, y_k\}.$ Let $n_j={\rm deg} P_{y_j,x}.$ Every eigenvector of $J_{y_j}$ corresponding to the case (b) is an eigenvector of $J_x$ as well. Therefore, by induction hypothesis, the dimension of the linear span of all eigenvectors of $J_{y_i}$ corresponding to the case (b) is equal
$$\#\Gamma_{y_i}-\deg P_{y_i,x}.$$
Such eigenvectors corresponding to $J_{y_i}$ and $J_{y_j},$ for $i\neq j$ have disjont supports, hence the total dimension of the eigenvectors corresponding to the case (b) for 
$J_{y_1},\ldots, J_{y_k}$ is equal 
$$\sum_{j=1}^k\#\Gamma_{y_j}-\sum_{j=1}^k{\rm \deg} P_{y_j,x}
$$  Consider the product
$$P_{y_1,x}(z)\ldots P_{y_k,x}(z).$$
We know that every polynomial $P_{y_j,x}$ has single roots. We have
$$P_{y_1,x}(z)\ldots P_{y_k,x}(z)=c\prod_{l=1}^L(z-r_l)^{n_l}.$$ By the reasoning performed in the first part of the proof, the root $r_l$ gives rise to $n_l-1$ linearly independent eigenvectors of $J_x.$ Moreover the degree of the polynomial $P_{x,x'}$ is equal to $L+1$ as
$$\deg P_{x,x'}=\deg P_{x,x}+1,$$
and $\deg P_{x,x}=L$ (cf. (\ref{vx})).
  The roots of $P_{x,x'}$ lead to $L+1$ linearly independent eigenvectors of $J_x,$ which are linearly independent from the ones constructed in (b), as they do not vanish at $x.$ Summarizing the number of linearly independent eigenvectors of $J_x$ is not less then
$$
\sum_{j=1}^k\#\Gamma_{y_j}-\sum_{j=1}^k{\rm \deg} P_{y_j,x}+\sum_{l=1}^L(n_l-1)+L+1=
\sum_{j=1}^k\#\Gamma_{y_j}+1=\#\Gamma_x.
$$
\end{proof}

\section{Essential selfadjointness and defect indices}
Let $z\notin \mathbb{C}.$ The function $v\in \ell^2(\Gamma)$ belongs to the defect space corresponding to $\overline{z}$ if $v$ is orthogonal to ${\rm Im}\, (\overline{z}I-J)=(\overline{z}I-J)({\mathcal F}(\Gamma)).$ In particular $v$ is orthogonal to $(\overline{z}I-J)\delta_x$ for any $x\in \Gamma.$ This implies  $Jv=zv.$
\begin{prop}
The defect indices of the operator $J$ cannot be greater than~1.
\end{prop}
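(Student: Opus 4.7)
The plan is to identify the defect space for $\bar z$ (with $z \notin \mathbb{R}$) as a subspace of the space of formal eigenfunctions $\{v : Jv = zv\}$ on $\Gamma$, and then invoke the uniqueness (up to constants) of such formal eigenfunctions established in Corollary 2.

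First I would recall the setup already provided in the paragraph immediately preceding the proposition: if $v \in \ell^2(\Gamma)$ is in the defect space of $\bar z$, then by definition $v \perp (\bar z I - J)\mathcal{F}(\Gamma)$. Testing this against $\delta_x$ for each $x \in \Gamma$ gives $(v, (\bar z I - J)\delta_x) = 0$, which by symmetry of $J$ on $\mathcal{F}(\Gamma)$ rewrites as $((zI - J)v)(x) = 0$ for every $x$. Hence $v$ satisfies the formal eigenvalue equation $Jv = zv$ on all of $\Gamma$.

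Next I would apply Corollary 2 directly. That corollary says that for every nonreal $z$, the space of functions $v$ on $\Gamma$ satisfying $Jv = zv$ pointwise is exactly one-dimensional (any such nonzero $v$ cannot vanish and is unique up to a constant multiple). Since the defect space for $\bar z$ embeds into this one-dimensional space of formal solutions, its dimension is at most $1$. By the standard fact that the defect indices are constant on each half-plane of nonreal $z$, this gives $\dim \ker(J^* - zI) \le 1$ for both $z$ in the upper and lower half-plane, i.e.\ both defect indices are at most $1$.

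The main (and only) subtle point is the implication from the orthogonality condition defining the defect space to the pointwise equation $Jv = zv$, but this is essentially immediate once we use $\delta_x \in \mathcal{F}(\Gamma)$ as test vectors and the symmetry of $J$; this is indeed already carried out in the introductory paragraph of Section 4. After that, the result is an immediate corollary of the uniqueness statement in Corollary 2, which is the real content. No further estimates or growth conditions on the coefficients $\lambda_x, \beta_x$ are needed, which is exactly why the bound on defect indices holds in the same generality as Corollary 2.
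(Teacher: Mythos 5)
Your proof is correct and is essentially the paper's own argument: the defect space for $\bar z$ consists of $\ell^2$ solutions of $Jv=zv$, and the uniqueness up to a constant multiple of such solutions (Corollary 2) bounds its dimension by $1$. The reduction from the orthogonality condition to the pointwise eigenvalue equation is, as you note, already carried out in the paragraph preceding the proposition, so no new ingredient is needed.
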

\begin{proof} Fix a nonreal number $z.$ 
Let $v\in \ell^2(\Gamma)$ satisfy $v\neq 0$  and
$Jv=zv.$
By Corollary 1 the function $v$ is unique up to a constant multiple. 
\end{proof}
Proposition 2 implies
\begin{cor} Let $J$ be a Jacobi matrix on $\Gamma.$ 
Fix a nonreal number $z$ and let $v$ denote the unique, up to a constant multiple, nonzero solution of the equation $Jv=zv.$
Then $J$ is essentially selfadjoint if and only if 
$ v\notin \ell^2(\Gamma).$ 
\end{cor}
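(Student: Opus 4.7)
The plan is to deduce the corollary from Proposition 2 together with the standard von Neumann criterion for essential selfadjointness, which states that the symmetric operator $J$ (with domain $\mathcal{F}(\Gamma)$) is essentially selfadjoint if and only if both defect indices $n_\pm(J)=\dim \ker(J^*-zI)$ (one $z$ in each open half-plane of $\mathbb{C}$) vanish.

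First, I would verify the identification already used implicitly in the opening paragraph of Section~4: for a nonreal $w$, a function $u\in \ell^2(\Gamma)$ lies in $\ker(J^*-wI)$ if and only if $u$ satisfies the formal eigenvalue equation $Ju=wu$ pointwise on $\Gamma$. This is just the fact that $u\perp (\overline{w}I-J)\delta_x$ for every $x\in\Gamma$ expands, via the definition of $J$ on $\delta_x$, into exactly the recurrence (\ref{recurc}) at $x$ with eigenvalue $w$.

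Next, I would combine this identification with Corollary~1 and Proposition~2. By Corollary~1, there is a nonzero formal solution $v$ of $Jv=zv$, unique up to a scalar, and $v$ vanishes nowhere. Consequently the defect space $\ker(J^*-zI)$ is either trivial, when $v\notin\ell^2(\Gamma)$, or equals $\mathbb{C}\,v$, when $v\in\ell^2(\Gamma)$; in either case its dimension is at most $1$, recovering Proposition~2. For the conjugate point $\bar z$, since the coefficients $\lambda_x,\beta_x$ are real, the map $u\mapsto \overline{u}$ is an $\ell^2$-preserving bijection between formal solutions at $z$ and formal solutions at $\bar z$; thus $n_+(J)=n_-(J)$, and both defect indices are zero precisely when $v\notin \ell^2(\Gamma)$.

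Putting these ingredients together gives the claim: $J$ is essentially selfadjoint iff $n_+(J)=n_-(J)=0$, iff the unique (up to a scalar) formal solution $v$ of $Jv=zv$ is not in $\ell^2(\Gamma)$. There is no substantive obstacle; the content of the corollary is entirely a packaging of Proposition~2 and Corollary~1 into the classical defect-index framework, and the only step that requires any care is the initial identification of the defect space with the space of $\ell^2$ formal solutions.
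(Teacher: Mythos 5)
Your proposal is correct and follows essentially the same route as the paper, which simply derives the corollary as an immediate consequence of Proposition~2 together with the identification of the defect space with $\ell^2$ formal eigenfunctions made at the start of Section~4. You merely make explicit the standard details the paper leaves implicit (the von Neumann criterion and the conjugation symmetry giving equal defect indices), all of which check out.
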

\begin{thm}
There exist essentially  nonselfadjoint Jacobi matrices on $\Gamma.$ 
\end{thm}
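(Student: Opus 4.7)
By Corollary 3, $J$ fails to be essentially selfadjoint if and only if the unique (up to a scalar) nonzero solution $v$ of $Jv=zv$ for a fixed nonreal $z$ lies in $\ell^2(\Gamma)$. So the plan is to exhibit a choice of $\Gamma$ and parameters $\{\lambda_x,\beta_x\}$ realizing this $\ell^2$-condition.

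The cleanest route is a reduction to the classical theory. I would take $\Gamma$ to be the degenerate tree in which every vertex at level $\ge 1$ has exactly one child (the paper's definition admits an arbitrary finite number of children and imposes no lower bound $\ge 2$). The level map $x_n\mapsto n$ then identifies $\Gamma$ with $\mathbb{N}_0$ and converts the operator $J$ of (3)-(4) into a classical tridiagonal Jacobi matrix on $\ell^2(\mathbb{N}_0)$ with parameters $\lambda_n=\lambda_{x_n}$, $\beta_n=\beta_{x_n}$. Any classical indeterminate example then transports to $\Gamma$. In particular, the Jacobi matrix with $\lambda_n=(n+1)^2$ and $\beta_n=0$, already singled out in the Introduction as failing essential selfadjointness on $\ell^2(\mathbb{N}_0)$, produces via this identification the desired counterexample, and Corollary 3 delivers the conclusion.

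If one insists on a genuinely branching example, my alternative plan would be: fix an infinite path $\{x_n\}_{n\ge 0}$ in a nontrivial tree, install classical indeterminate coefficients along this spine, and choose the side-branch weights $\lambda_y$ extremely small. Using Corollary 2 (or the inductive construction in Lemma 3) to produce the unique solution $v$ of $Jv=zv$, one would estimate $|v(y)|$ for off-spine $y$ by a geometric cascade in the small side $\lambda$'s, so that the total off-spine contribution to $\|v\|^2$ stays finite; together with $\ell^2$-summability of $v$ on the spine (inherited from the classical indeterminate case), this would yield $v\in\ell^2(\Gamma)$.

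The main obstacle in this branching construction is that the values of $v$ on the side branches feed back into the recurrence at each spine vertex $x_n$, so one cannot simply treat them as a passive perturbation: one must verify that for sufficiently small off-spine $\lambda_y$ the effective spine recurrence remains close to a classical indeterminate one, and that the perturbed solution remains $\ell^2$ on the spine. The path-reduction approach sidesteps this issue entirely and is therefore the proof I would favour for a bare existence statement.
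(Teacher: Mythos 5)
Your first route does not prove the theorem as the paper intends it. The tree $\Gamma$ is fixed at the start of Section 2 as a general one-ended tree, and the content of Theorem 3 is that \emph{on such a tree} --- in particular on a genuinely branching one, where by \cite{ksz} every homogeneous $J$ is automatically essentially selfadjoint --- one can still produce a non-essentially-selfadjoint matrix. Collapsing $\Gamma$ to the half-line $\mathbb{N}_0$ (every $N_x$ a singleton) reduces the statement to the classical indeterminate moment problem and says nothing about trees; it also sits uneasily with the paper's standing picture of $\Gamma$ as having infinitely many level-zero origins. The paper's proof, by contrast, works on an arbitrary given $\Gamma$: it fixes an infinite path $\{x_n\}$ and constructs $J$ and an $\ell^2$ eigenfunction $v$ \emph{simultaneously} by induction on $n$, maintaining $\|v\mid_{\Gamma_{x_n}}\|_2^2\le 1-2^{-n}$. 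At each step Lemma 2 guarantees that the right-hand side of the recurrence at $x_{n-1}$ is nonzero, so one may take $\lambda_{x_{n-1}}$ very large and thereby force $|v(x_n)|^2\le 2^{-n-1}$; the side branches hanging off $x_n$ carry solutions supplied by Lemma 3, rescaled to have total norm at most $2^{-n-1}$, and the free parameters $\lambda_y$ at the branch roots are then chosen so that the eigenvalue equation holds there. Corollary 3 converts $v\in\ell^2(\Gamma)$ into failure of essential selfadjointness.

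Your second, branching sketch is the right idea, and you correctly identify the obstacle (the side branches feed back into the spine recurrence), but you leave exactly that point unresolved, so as written it is not a proof. Note that the paper's Remark 2 carries out your second plan with no estimates at all: take a bounded Jacobi matrix $J_0$ on $\Gamma$ (for instance $\beta_x\equiv 0$ and $\lambda_x=d_x^{-1/2}$), add the degenerate matrix $J_1$ supported on the spine with classically indeterminate coefficients, and observe that $J=J_0+J_1$ is not essentially selfadjoint because a bounded symmetric perturbation preserves defect indices. If you want a short existence proof on a genuine tree, that is the argument to write down.
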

\begin{proof}  We set $\beta_x\equiv 0.$ Fix a nonreal number $z.$
 Choose an infinite path $x_n$ in $\Gamma$ so that $l(x_n)=n.$  We will construct a matrix $J$ by induction on $n.$ 
 Assume
 we have constructed a matrix $J$ on $\Gamma_{x_{n-1}}\setminus \{x_{n-1}\}$ and a nonvanishing function $v$ on $\Gamma_{x_{n-1}}$ so that
 $$ \|v\mid_{\Gamma_{x_{n-1}}}\|_2^2\le 1-2^{-(n-1)}$$
 and 
 $$ Jv(x)=zv(x),\quad x\in \Gamma_{x_{n-1}}\setminus \{x_{n-1}\}.$$
 We want to extend the definition of $J$ and $v$ so that the conclusion remains valid when $n-1$ is replaced by $n.$
 
 Our first task is to define $\lambda_{x_{n-1}}$ and $v(x_{n})$ so that
 $$ zv(x_{n-1})=\lambda_{x_{n-1}} v(x_n)+\sum_{y\in N_{x_{n-1}}}\lambda_yv(y) ,$$ i.e.
 \begin{equation}\label{nonzero}
 \lambda_{x_{n-1}} v(x_n)=zv(x_{n-1})-\sum_{y\in N_{x_{n-1}}}\lambda_yv(y).
 \end{equation}
  The right hand side of (\ref{nonzero})  cannot vanish by Lemma 2. 
We will define  $\lambda_{x_{n-1}}$ and $v(x_n)$ so as to satisfy (\ref{nonzero}).
 By specifying $\lambda_{x_{n-1}}$ large enough we may assume that
 $$|v(x_n)|^2\le 2^{-n-1}.$$
 
 For any $y\in N_{x_n}$ and $y\neq x_{n-1}$ consider the subtree $\Gamma_y\setminus \{y\}.$ Set $\lambda_x=1$ for any $x\in \Gamma_y\setminus \{y\}.$ By Lemma 3 there is a nonzero solution $v_y$  defined on $\Gamma_y$ satisfying
 $$Jv_y(x)=zv_y(x),\quad x\in \Gamma_y\setminus \{y\}.$$ We may assume
 that
 $$\sum_{y\in N_{x_n}\setminus\{x_{n-1}\}}\|v_y\mid_{\Gamma_y}\|_2^2\le 2^{-n-1}.$$
 We want to define the numbers $\lambda_{y}$ for $y\in N_{x_n}$ and $y\neq x_{n-1}$ so that
 $$zv_y(y)=Jv(y)=\lambda_y v_y(x_n)+\sum_{x\in N_y}\lambda_xv_y(x).$$
 Hence we want to have
 \begin{equation}\label{th}\lambda_y= {zv_y(y)-\displaystyle\sum_{x\in N_y}\lambda_xv_y(x)\over v(x_n)}.
 \end{equation}
 By Lemma 2 the numerator (\ref{th}) cannot vanish. We may multiply $v_y$ by a constant of absolute value 1 so that the expression on the right hand side of (\ref{th}) becomes positive. In this way the values $\lambda_y$ for
  $y\in N_{x_n}$ and $y\neq x_{n-1}$ are defined. 
  We extend the definition of $v$ to $\Gamma_{x_n}$ by setting
  $$
v(x)=v_y(x) , \ x\in \Gamma_y,\ y\neq x_{n-1}.$$
On the way we have also extended the definition of $J$ so that
$$Jv(x)=zv(x),\quad x\in \Gamma_{x_n}\setminus\{x_n\}.$$
Moreover by construction we have
\begin{multline*}\|v\mid_{\Gamma_{x_{n}}}\|_2^2
=\|v\mid_{\Gamma_{x_{n-1}}}\|_2^2+\sum_{y\in N_{x_n}, y\neq x_{n-1}}\|v\mid_{\Gamma_{y}}\|_2^2 +|v(x_n)|^2\\ \le 1-2^{-(n-1)}+2^{-n-1}+2^{-n-1}=1-2^{-n}.
\end{multline*}
 \end{proof}
{\bf Remark 1.} The Jacobi matrix $J$ constructed in the proof satisfies $\beta_x\equiv 0$ and $\lambda_x=1$ for vertices $x$ whose distance from the path $\{x_n\}$ is greater than 2.

{\bf Remark 2.} Another way of proving Theorem 3 is as follows. Fix any Jacobi matrix $J_0$ so that the operator $J_0$ is bounded on $\ell^2(\Gamma).$ For example
Let $\beta_x\equiv 0$ and $\lambda_x=d_x^{-1/2},$ where $d_x=\# N_{x'}.$
Let $S$ denote the operator acting according to the rule
$$Sv(x)=\lambda_xv(x').$$ Then
\begin{multline*}\|Sv\|_2^2=\sum_{x\in \Gamma} |Sv(x)|^2=
\sum_{x\in \Gamma} \lambda_x^2|v(x')|^2\\ =
\sum_{x'\in \Gamma\setminus \Gamma_0}|v(x')|^2\sum_{x\in N_{x'}}\lambda_x^2
=\sum_{x'\in \Gamma\setminus \Gamma_0}|v(x')|^2\le \|v\|_2^2.
\end{multline*}
The operator $S$ is thus bounded. The adjoint operator $S^*$ acts by the rule
\begin{align*}&S^*v(x)=\sum_{y\in N_x} \lambda_yv(y), & x\notin\Gamma_0,\\
&S^*v(x)=0, &x\in \Gamma_0.
\end{align*}
Then $J_0=S+S^*$ is the Jacobi matrix such that $\|J_0\|_{2\to 2}\le 2.$
Fix an infinite path $\{x_n\}$ and a sequence of positive numbers $\{\lambda_n\}.$ Let $J_1$ be the degenerate Jacobi matrix defined
by $\beta_x\equiv 0$ and $\lambda_{x_n}=\lambda_n,$ $\lambda_x=0$ for $x\notin \{x_n\}.$
Choose the coefficients $\lambda_n$ so that the classical Jacobi matrix associated with the coefficients $\lambda_n$ and $\beta_n\equiv 0$ is not essentially selfadjoint. For example let $\lambda_n=2^{-n}.$ Let $J=J_0+J_1.$ The matrix $J$ is nondegenerate. Moreover $J$ is not essentially selfadjoint as a bounded perturbation of not essentially selfadjoint operator.

The next theorem provides a relation between Jacobi matrices on the tree $\Gamma$ and classical Jacobi matrices associated with the infinite paths of $\Gamma.$ 

The Jacobi matrix $J$ on $\Gamma$ will be called {\it symmetric} if $\beta_x\equiv 0.$
\begin{thm} Let $J$ be a nonessentially selfadjoint symmetric Jacobi matrix on $\Gamma.$  Choose an infinite path
$\{x_n\}$ so that $l(x_n)=n.$ Then the classical Jacobi matrix $J_0$ with $\lambda_n=\lambda_{x_n}$ and $\beta_n\equiv 0$ is nonessentialy selfadjoint.
\end{thm}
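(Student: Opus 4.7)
Since $J$ is assumed not essentially self-adjoint, by Corollary 3 there is a nonzero $v\in\ell^2(\Gamma)$ satisfying $Jv=zv$ for a fixed $z$ with $\Im z>0$; by Corollary 1, $v(x)\ne 0$ for every $x\in\Gamma$. Set $u(n):=v(x_n)$, so $u\in\ell^2(\mathbb{N}_0)$ with $u(n)\ne 0$ for every $n$. The plan is to show that $u$ gives rise to a nonzero $\ell^2$-defect vector for $J_0$ at $z$.

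For each $n\ge 1$ and each off-path child $y\in N_{x_n}\setminus\{x_{n-1}\}$ the subtree $\Gamma_y$ is finite because $\ell(y)=n-1$; hence the truncation $J_y$ is a genuine finite symmetric matrix, and its Green's function $m_y(z):=\langle(J_y-z)^{-1}\delta_y,\delta_y\rangle$ is a Herglotz function, in particular $\Im m_y(z)>0$ for $\Im z>0$. Using $Jv=zv$ at every vertex of $\Gamma_y$ one reads off $(J_y-z)(v|_{\Gamma_y})=-\lambda_y v(x_n)\delta_y$, so that $v(y)=-\lambda_y v(x_n)\, m_y(z)$. Plugging this into $(Jv)(x_n)=zv(x_n)$ yields the effective recurrence
\begin{equation*}
\lambda_{x_n}u(n+1)+\lambda_{x_{n-1}}u(n-1)=\bigl(z+M_n(z)\bigr)u(n),\qquad n\ge 0,
\end{equation*}
with $M_n(z):=\sum_{y\in N_{x_n}\setminus\{x_{n-1}\}}\lambda_y^2\, m_y(z)$, which is Herglotz (so $\Im M_n(z)\ge 0$ for $\Im z>0$) as a positive linear combination of Herglotz functions.

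I would then argue by contradiction: suppose $J_0$ were essentially self-adjoint. Then $(\overline{J_0}u,u)\in\mathbb{R}$, while the effective recurrence gives
\begin{equation*}
(\overline{J_0}u,u)=z\|u\|^2+\sum_{n\ge 0}M_n(z)|u(n)|^2.
\end{equation*}
Taking imaginary parts yields $\Im z\,\|u\|^2+\sum_{n}\Im M_n(z)|u(n)|^2=0$, a sum of nonnegative terms whose first summand is strictly positive since $u\ne 0$ and $\Im z>0$; this is the desired contradiction, forcing $J_0$ to be non-essentially self-adjoint.

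The main technical obstacle is justifying that $u$ lies in $D(\overline{J_0})$, equivalently that $Mu=(M_n(z)u(n))_n\in\ell^2(\mathbb{N}_0)$. Noting that $(Mu)(n)=-\sum_{y\in N_{x_n}\setminus\{x_{n-1}\}}\lambda_y v(y)$, Cauchy-Schwarz gives $|(Mu)(n)|^2\le\bigl(\sum_y\lambda_y^2\bigr)\,\|v|_{N_{x_n}\setminus\{x_{n-1}\}}\|^2$; summing over $n$ this is controlled by $\|v\|^2$ under the natural condition $\sup_n\sum_y\lambda_y^2<\infty$. In the fully unbounded case I would replace $u$ by its finite truncations $u^{(N)}:=u\cdot\mathbf{1}_{\{0,\ldots,N\}}$, which lie in $D(\overline{J_0})$ automatically, apply the imaginary-part identity to each $u^{(N)}$, and then control the resulting boundary term $\Im\bigl[\lambda_{x_N}u(N+1)\overline{u(N)}\bigr]$ along a subsequence $N_k\to\infty$, exploiting that both $v$ and $Jv=zv$ belong to $\ell^2(\Gamma)$ to force this term to vanish. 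This subsequence control is the key difficulty of the argument.
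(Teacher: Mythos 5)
Your reduction of $Jv=zv$ to the effective three--term recurrence $\lambda_{x_n}u(n+1)+\lambda_{x_{n-1}}u(n-1)=\bigl(z+M_n(z)\bigr)u(n)$, with $M_n$ a positive combination of Herglotz functions coming from the finite off--path subtrees, is a correct computation. But the argument does not close, and you have located the gap yourself: everything hinges on $u\in D(\overline{J_0})$, i.e.\ on $Mu\in\ell^2$, and this is not a removable technicality --- it is essentially the whole theorem in disguise, since $Mu\in\ell^2$ is exactly the statement that the formal $J_0u$ is in $\ell^2$. Your proposed rescue fails on its own terms: taking imaginary parts of the truncated summation--by--parts identity gives
\[
\mathrm{Im}\bigl[\lambda_{x_N}u(N+1)\overline{u(N)}\bigr]=\mathrm{Im}\,z\sum_{n\le N}|u(n)|^2+\sum_{n\le N}\mathrm{Im}\,M_n(z)\,|u(n)|^2\ \ge\ \mathrm{Im}\,z\;|u(0)|^2>0,
\]
so the boundary term is bounded \emph{below} by a positive constant for every $N$ and cannot be ``forced to vanish along a subsequence.'' Conversely, essential selfadjointness of $J_0$ only forces the Wronskian boundary term to vanish for vectors in $D(J_0^*)$; for an arbitrary $\ell^2$ sequence it implies nothing. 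The only unconditional consequence of $\lambda_{x_N}|u(N+1)|\,|u(N)|\ge\delta>0$ together with $u\in\ell^2$ is $\sum\lambda_{x_n}^{-1}<\infty$, which is the Carleman criterion of Theorem 6, not the present theorem.

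There is also a structural reason why no argument of this shape can succeed: you use the hypothesis $\beta_x\equiv 0$ only through the Herglotz property of $M_n(z)$, which holds for arbitrary real $\beta$ on the off--path subtrees. The Remark following the theorem in the paper constructs exactly such a $J$ (with $\beta_{y_{n-1}}\neq 0$ and $M_n(z)=\lambda_{y_{n-1}}^2/(\beta_{y_{n-1}}-z)$, still Herglotz) which is not essentially selfadjoint while the path matrix $J_0$ \emph{is} essentially selfadjoint; there $u\notin D(J_0^*)$ because $M_n(z)$ blows up. So the implication ``$u\in\ell^2$ solves an effective recurrence with Herglotz potential $\Rightarrow$ $J_0$ not essentially selfadjoint'' is false, and any correct proof must use $\beta\equiv 0$ more strongly. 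The paper does so via Lemma 4: for $z=i$ the function $\tilde v(x)=i^{-l(x)}v(x)$ is \emph{positive}, whence the recurrence at $x_n$ yields the difference inequality $\lambda_{x_n}\tilde v(x_{n+1})-\lambda_{x_{n-1}}\tilde v(x_{n-1})>0$, which telescopes to pointwise lower bounds such as $\tilde v(x_{2n})\ge \frac{\lambda_{x_0}\lambda_{x_2}\cdots\lambda_{x_{2n-2}}}{\lambda_{x_1}\lambda_{x_3}\cdots\lambda_{x_{2n-1}}}\,\tilde v(x_0)$; square summability of $\tilde v(x_n)$ then gives $\sum\bigl[p_n^2(0)+q_n^2(0)\bigr]<\infty$, the classical criterion for non--essential selfadjointness of $J_0$. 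That positivity and telescoping step is the ingredient your approach is missing.
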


Before proving the theorem we will need the following lemma.
\begin{lemma} Let $J$ be a  symmetric Jacobi matrix on $\Gamma.$  
Let $Jv=iv$ and $v(x_0)=1$ for a vertex $x_0$ on the level $0.$ Then the function
$\tilde{v}(x) = i^{-l(x)}v(x)$ is positive.
\end{lemma}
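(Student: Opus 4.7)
The plan is to substitute $v(x) = i^{\ell(x)}\tilde{v}(x)$ to convert the hypothesis $Jv = iv$ into a real-coefficient recurrence, then use the one-dimensionality from Corollary 2 to deduce that $\tilde{v}$ is real, and finally establish positivity through an induction on finite subtrees that expresses $\tilde{v}$ at every vertex as a positive multiple of its value at any ancestor.

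Plugging $v(x) = i^{\ell(x)}\tilde{v}(x)$ into $iv(x) = \lambda_x v(x') + \sum_{y\in N_x}\lambda_y v(y)$ and dividing by $i^{\ell(x)+1}$ (using $\ell(x')=\ell(x)+1$, $\ell(y)=\ell(x)-1$ for $y\in N_x$, and $i^{-2}=-1$) gives the real-coefficient equation
\begin{equation}\label{real}
\lambda_x\,\tilde{v}(x') = \tilde{v}(x)+\sum_{y\in N_x}\lambda_y\,\tilde{v}(y),\qquad x\in\Gamma,
\end{equation}
which at level zero reduces to $\lambda_x\tilde{v}(x') = \tilde{v}(x)$. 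The inverse substitution sends nonzero solutions of (\ref{real}) on $\Gamma$ bijectively to nonzero solutions of $Jv = iv$, so Corollary 2 implies that the solution space of (\ref{real}) is one-dimensional. Because (\ref{real}) has real coefficients, $\overline{\tilde{v}}$ is again a solution, and $\overline{\tilde{v}}(x_0) = 1 = \tilde{v}(x_0)$; uniqueness forces $\overline{\tilde{v}} = \tilde{v}$, so $\tilde{v}$ is real-valued.

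For positivity I prove by induction on $\ell(z)$ that for every $z\in\Gamma$ there exist positive reals $\{\alpha_{z,t}\}_{t\in\Gamma_z}$ with $\alpha_{z,z}=1$ such that any function $u$ on $\Gamma_z$ satisfying (\ref{real}) at each $t\in\Gamma_z\setminus\{z\}$ obeys $u(t)=\alpha_{z,t}\,u(z)$. The base case $\ell(z)=0$ is trivial. For $\ell(z)=n\ge 1$ and $w\in N_z$, the inductive hypothesis yields $u|_{\Gamma_w}(t)=\alpha_{w,t}\,u(w)$, and inserting this into (\ref{real}) at $w$ (using $w'=z$) gives
\[
\lambda_w\,u(z) = u(w)\Bigl(1 + \sum_{u'\in N_w}\lambda_{u'}\alpha_{w,u'}\Bigr),
\]
so $\alpha_{z,w}:=\lambda_w/\bigl(1 + \sum_{u'\in N_w}\lambda_{u'}\alpha_{w,u'}\bigr)>0$ and $\alpha_{z,t}:=\alpha_{w,t}\alpha_{z,w}>0$ for $t\in\Gamma_w$. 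To finish, fix the infinite path $x_0,x_1,x_2,\ldots$ defined by $x_{n+1}=x_n'$. Since $x_0\in\Gamma_{x_n}$, the identity $1=\tilde{v}(x_0)=\alpha_{x_n,x_0}\tilde{v}(x_n)$ forces $\tilde{v}(x_n)>0$; for arbitrary $x\in\Gamma$, picking $n$ large enough that $x\in\Gamma_{x_n}$ (possible because $\Gamma$ has one end at infinity) gives $\tilde{v}(x)=\alpha_{x_n,x}\tilde{v}(x_n)>0$.

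The delicate point is the inductive construction of the propagator $\alpha_{z,t}$: one must recognise that (\ref{real}) is imposed only at the interior vertices of $\Gamma_z$ (the equation at the root $z$ involves $z'$ outside $\Gamma_z$), giving a one-parameter family of solutions parametrised by $u(z)$, and arrange the inductive step so that $\alpha_{z,t}$ comes out as a positive combination of previously constructed positive quantities.
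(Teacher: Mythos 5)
Your proposal is correct. The first half --- passing to the real recurrence $\lambda_x\tilde v(x')=\tilde v(x)+\sum_{y\in N_x}\lambda_y\tilde v(y)$ and deducing that $\tilde v$ is real-valued from the one-dimensionality of the solution space (Corollary 2) --- is exactly what the paper does, the only cosmetic difference being that the paper compares $\tilde v$ with $\operatorname{Re}\tilde v$ rather than with $\overline{\tilde v}$. The positivity step, however, is genuinely different. The paper argues by contradiction: it notes that if $\tilde v$ were positive at every level-zero vertex then positivity would propagate upward through the recurrence, and otherwise it claims one can find two level-zero vertices $y_1,y_2$ with a common parent and opposite signs, whence the parent would have to be simultaneously positive and negative. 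You instead build, by induction on $\ell(z)$, an explicit positive propagator $\alpha_{z,t}$ with $u(t)=\alpha_{z,t}u(z)$ for every solution on $\Gamma_z$, and then anchor everything at $\tilde v(x_0)=1$ via the ancestors $x_n$ of $x_0$. Your route is longer but buys something real: it proves the stronger statement that $\tilde v(t)/\tilde v(z)$ is a fixed positive constant for every descendant $t$ of $z$, so all values in a subtree automatically share the sign of the root; this sidesteps the delicate point in the paper's version, where the existence of two opposite-sign siblings at level zero (rather than a sign change occurring only higher up in the tree) is asserted without justification. You correctly isolate the one subtlety of your construction, namely that the recurrence is imposed only at interior vertices of $\Gamma_z$, so the solution space on $\Gamma_z$ is one-dimensional and parametrised by $u(z)$.
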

\begin{proof}
By assumptions we have
$$
iv(x)=\lambda_{x}v(x')+\sum_{y\in N_x} \lambda_yv(y).
$$
Thus
\begin{equation}
\tilde{v}(x)=\lambda_{x}\tilde{v}(x')-\sum_{y\in N_x} \lambda_y\tilde{v}(y).\label{recu1}
\end{equation}
We know that $\tilde{v}$ cannot vanish and is unique up to a constant multiple. The function ${\rm Re}\,\tilde{v}$ satisfies (\ref{recu1}) and takes the value 1 at $x_0.$ Thus
$\tilde{v}={\rm Re}\,\tilde{v},$ i.e. $\tilde{v}$ is real valued. We will show that $\tilde{v}(x)$ is positive by induction. Observe that if $\tilde{v}(x)$ is positive for any vertex on  level zero then
by (\ref{recu1}) $\tilde{v}$ is positive.
Assume the opposite, i.e. $\tilde{v}$ is  negative at some vertices on  level zero.  Since $\tilde{v}(x_0)=1$ there are two vertices $y_1,y_2$ on level zero so that
$y_1'=y_2'$ and 
$\tilde{v}(y_1)>0,$  $\tilde{v}(y_2)<0.$ By (\ref{recu1}) evaluated at $x=y_1$ and $x=y_2$ we get
that $\tilde{v}(y_1')>0$ and $\tilde{v}(y_2')<0,$ which gives a contradiction.

\end{proof}
\begin{proof}[Proof of Theorem 3]
By (\ref{recu1}) evaluated at $x=x_n$ we obtain
$$
\tilde{v}(x_n)=\lambda_{x_n}\tilde{v}(x_{n+1})-\sum_{y\in N_{x_n}} \lambda_y\tilde{v}(y).
$$
Hence
$$\lambda_{x_n}\tilde{v}(x_{n+1})-\lambda_{x_{n-1}}\tilde{v}(x_{n-1})=\tilde{v}(x_n)+\sum_{y\in N_{x_n}\atop y\neq x_{n-1}} \lambda_y\tilde{v}(y)> 0.$$ The last inequality follows from Lemma 4.
Therefore
\begin{eqnarray*}
\tilde{v}(x_{2n})&\ge &{\lambda_{x_0}\lambda_{x_2}\ldots \lambda_{x_{2n-2}}\over \lambda_{x_1}\lambda_{x_3}\ldots \lambda_{x_{2n-1}}}\tilde{v}(x_0),\\
\tilde{v}(x_{2n+1})&\ge &{\lambda_{x_1}\lambda_{x_3}\ldots \lambda_{x_{2n-1}}\over \lambda_{x_2}\lambda_{x_4}\ldots \lambda_{x_{2n}}}\tilde{v}(x_1).
\end{eqnarray*}
By assumptions the seqeunce $\tilde{v}(x_n)$ is square summable. Thus
\begin{equation}\label{non}
 \sum_{n=1}^\infty \left ({\lambda_{x_0}\lambda_{x_2}\ldots \lambda_{x_{2n-2}}\over \lambda_{x_1}\lambda_{x_3}\ldots \lambda_{x_{2n-1}}}\right )^2 +\left ({\lambda_{x_1}\lambda_{x_3}\ldots \lambda_{x_{2n-1}}\over \lambda_{x_2}\lambda_{x_4}\ldots \lambda_{x_{2n}}} \right )^2<\infty.
 \end{equation}
The last inequality is equivalent to nonessential selfadjointness of the classical Jacobi matrix $J_0$ with $\lambda_n=\lambda_{x_n}$ and $\beta_n\equiv 0.$ Indeed, let $p_n$ and $q_n$ denote the polynomials of the first and the second kind associated with $J_0.$ Then (\ref{non}) reduces to
$$\sum_{n=1}^\infty [p_n^2(0)+q_n^2(0)]<\infty.$$
\end{proof}
{\bf Remark}
The symmetry assumption $\beta_x\equiv 0$ is essential. There exist nonessentially selfadjoint Jacobi matrices $J$ on $\Gamma$ so that the
classical Jacobi matrix $J_0$ associated with the path $\{x_n\}$ is essentially selfadjoint. Indeed for every vertex $x_n,$ $n\ge 1,$ fix a vertex $y_{n-1}\neq x_{n-1}$ in $N_{x_n}.$ Let $P$ denote the orthogonal projection from $\ell^2(\Gamma)$ onto $\ell^2(\{x_n,y_n\}_{n=0}^\infty).$
We will consider   Jacobi matrices $J$ so that $\beta_x=0$ for $x\notin\{y_n\}_{n=0}^\infty.$
Let $J_1=PJP.$ First we are going to study the essential selfadjointness of the operator $J_1.$ To this end consider the equation
$J_1v(x)=zv(x).$ This is equivalent to
\begin{eqnarray*}
zv(x_n)&=&\lambda_{x_n}v(x_{n+1})+\lambda_{x_{n-1}}v(x_{n-1})+\lambda_{y_{n-1}}v(y_{n-1}),\\
zv(y_{n-1})&=&\lambda_{y_{n-1}}v(x_n)+\beta_{y_{n-1}}v(y_{n-1}).
\end{eqnarray*}
We have
\begin{equation}\label{y}
v(y_{n-1})={\lambda_{y_{n-1}}\over z-\beta_{y_{n-1}}}v(x_n).
\end{equation}
Hence
$$zv(x_n)=\lambda_{x_n}v(x_{n+1})+\lambda_{x_{n-1}}v(x_{n-1})+{\lambda_{y_{n-1}}^2\over z-\beta_{y_{n-1}}}v(x_n).$$
Set $z=i,$ $v_n:=v_{x_n},$ $\mu_n:=\lambda_{y_{n-1}},$ $\lambda_n:=\lambda_{x_n}$ and $\beta_n:=\beta_{y_{n-1}}.$
Then
$$\left [{\beta_n\mu_n^2\over 1+\beta_n^2}+\left (1+ {\mu_n^2\over 1+\beta_n^2}\right )\,i\right ]v_n=
\lambda_nv_{n+1}+\lambda_{n-1}v_{n-1}.$$
Set $\mu_n^2=1+\beta_n^2.$ Then we obtain
$$ 2iv_n=\lambda_nv_{n+1}-\beta_nv_n+\lambda_{n-1}v_{n-1}.$$
Assume the classical Jacobi matrix with coefficients $\lambda_n$ and $-\beta_n$ is not essentially selfadjoint. Then
the sequence $v_n$ is square summable. Moreover (\ref{y}) implies 
$$|v(y_{n-1})|^2={\mu_n^2\over 1+\beta_n^2}|v_n|^2=|v_n|^2.$$
Hence
$$\|v\|^2=\sum_{n=0}^\infty |v(x_n)|^2+|v(y_n)|^2<\infty,$$ i.e. the operator $J_1$ is not essentialy selfadjoint.
Let $J_2$ be any bounded Jacobi matrix on $\Gamma.$ Then $J=J_1+J_2$ is a nonessentially selfadjoint Jacobi matrix on $\Gamma.$ 

The matrix $J_0$ is associated with the coefficients $\lambda_n=\lambda_{x_n}$ and $\beta_n\equiv 0.$ 
Thus, in order to conclude the reasoning it suffices to prove the following.
\begin{lemma} There exists a nonselfadjoint classical Jacobi matrix $J$ 
$$Jx_n=\lambda_n x_{n+1}-\beta_nx_n+\lambda_{n-1}x_{n-1}$$
so that the Jacobi matrix
$$J'x_n=\lambda_n x_{n+1}+\lambda_{n-1}x_{n-1}$$
is essentially selfadjoint.
\end{lemma}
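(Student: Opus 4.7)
The plan is to exhibit explicit sequences $\{\lambda_n\}$ and $\{\beta_n\}$ and verify both halves via the classical Hamburger criterion: a classical Jacobi matrix is essentially selfadjoint if and only if $\sum_n[p_n(0)^2+q_n(0)^2]=\infty$, where $p_n,q_n$ are its first- and second-kind polynomials.

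Take $\lambda_{2k}=a^k$, $\lambda_{2k+1}=b^k$ with fixed $1<a<b$ (concretely $a=2$, $b=4$). For the matrix $J'$ (zero diagonal), the recurrence at $z=0$ becomes $\lambda_n p_{n+1}(0)=-\lambda_{n-1}p_{n-1}(0)$, which forces $p^{J'}_{2k+1}(0)=0$ and $q^{J'}_{2k}(0)=0$, while telescoping gives $|q^{J'}_{2n+1}(0)|=b^{n(n-1)/2}/a^{n(n+1)/2}$. Since $b>a$ this quantity grows superexponentially in $n$, so the criterion's sum diverges and $J'$ is essentially selfadjoint.

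For $J$, fix $r\in(a^{-1/4},1)$ (nonempty because $a>1$) and set $\beta_n=\lambda_n r+\lambda_{n-1}/r$, with the convention $\lambda_{-1}=0$; these are positive reals. A direct check shows $p_n(0)=r^n$ solves the lemma's zero-$z$ recurrence $\lambda_n p_{n+1}(0)=\beta_n p_n(0)-\lambda_{n-1}p_{n-1}(0)$ with the correct initial data, so $\sum_n p_n(0)^2<\infty$. The Wronskian identity $\lambda_n[p_n q_{n+1}-p_{n+1}q_n]=1$ then reduces to $q_{n+1}(0)=rq_n(0)+r^{-n}/\lambda_n$, giving $q_n(0)=r^{n-1}\sum_{k=0}^{n-1}r^{-2k}/\lambda_k$. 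The choice $r>a^{-1/4}$ makes the inner sum converge, since its even and odd subsums are geometric with ratios $r^{-4}/a<1$ and $r^{-4}/b<1$ respectively; hence $q_n(0)=O(r^n)$ and $\sum_n q_n(0)^2<\infty$. By the Hamburger criterion, $J$ is non-essentially selfadjoint.

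The core difficulty is reconciling two opposing demands: $\{\lambda_n\}$ must grow fast enough along both parities to let $q_n^{J}(0)$ decay geometrically once $\beta_n$ is tuned, yet the even and odd subsequences must be sufficiently unbalanced for the zero-diagonal matrix $J'$ to stay determinate without any such tuning. The lopsided exponential $a^k,b^k$ with $1<a<b$ together with the window $r>a^{-1/4}$ exactly meets both constraints.
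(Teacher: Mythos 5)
Your proof is correct, and it reaches the conclusion by a genuinely different construction than the paper's. Both arguments work at the spectral point $z=0$ and use the same classical criterion (limit circle iff both $\sum p_n(0)^2<\infty$ and $\sum q_n(0)^2<\infty$), but the mechanisms are nearly opposite. The paper insists on $\lambda_{2n}=\lambda_{2n+1}$ (e.g.\ $\lambda_{2n}=\lambda_{2n+1}=q^n$) and tunes $\beta_{2n}$, $\beta_{2n+1}$ so that the even-indexed subsequence of any solution of $Jx=0$ decouples into the two-term recurrence $0=\lambda_{2n}x_{2n+2}+\lambda_{2n-2}x_{2n-2}$, whose solutions decay geometrically; the odd terms are then controlled by the even ones, and the equality $\lambda_{2n}=\lambda_{2n+1}$ is exactly what produces the explicit non-square-summable solution $x_{2n}=(-1)^n$, $x_{2n+1}=0$ of $J'x=0$, giving determinacy of $J'$. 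You instead take deliberately \emph{unequal} growth rates $\lambda_{2k}=a^k$, $\lambda_{2k+1}=b^k$ with $1<a<b$, so that determinacy of $J'$ comes from the superexponential growth of $q^{J'}_{2n+1}(0)=\pm\, b^{n(n-1)/2}/a^{n(n+1)/2}$, and you force indeterminacy of $J$ by the Riccati-type choice $\beta_n=\lambda_n r+\lambda_{n-1}/r$, which makes $p_n(0)=r^n$ exactly and lets the Wronskian identity $\lambda_n[p_nq_{n+1}-p_{n+1}q_n]=1$ produce the closed form $q_n(0)=r^{n-1}\sum_{k<n}r^{-2k}/\lambda_k$; the window $a^{-1/4}<r<1$ then gives geometric decay of both families. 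Your version has the advantage of being fully explicit (the paper leaves the choice of increasing $\lambda_{2n}$ making the decoupled recurrence limit-circle to a remark) and of exhibiting both $p_n(0)$ and $q_n(0)$ in closed form; the paper's version avoids second-kind polynomials for $J$ entirely by arguing directly that every solution of $Jx=0$ is square summable. I see no gap in your argument.
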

\begin{proof} We will assume that $\beta_n\neq 0.$
Nonessential selfadjointness of $J$ is equivalent to the fact that every solution of the recurrence relation
$$0=\lambda_n x_{n+1}-\beta_nx_n+\lambda_{n-1}x_{n-1},\qquad n\ge 1,$$
is square summable.
Assume the sequence $x_n$ satisfies this recurrence relation. Then
\begin{eqnarray}
\beta_{2n}x_{2n}&=&\lambda_{2n}x_{2n+1}+\lambda_{2n-1}x_{2n-1},\label{first}\\
\beta_{2n+1}x_{2n+1}&=&\lambda_{2n+1}x_{2n+2}+\lambda_{2n}x_{2n}.\nonumber
\end{eqnarray}
Thus
\begin{eqnarray}
x_{2n+1}&=&{\lambda_{2n+1}\over \beta_{2n+1}}x_{2n+2}+{\lambda_{2n}\over \beta_{2n+1}}x_{2n},\label{one}\\
x_{2n-1}&=&{\lambda_{2n-1}\over \beta_{2n-1}}x_{2n}+{\lambda_{2n-2}\over \beta_{2n-1}}x_{2n-2}\nonumber
\end{eqnarray}
Plugging in the last two equations into (\ref{first})  results in
$$\left (\beta_{2n}-{\lambda_{2n}^2\over \beta_{2n+1}}-{\lambda_{2n-1}^2\over \beta_{2n-1}}\right )x_{2n}=
{\lambda_{2n}\lambda_{2n+1}\over \beta_{2n+1}}x_{2n+2}+{\lambda_{2n-2}\lambda_{2n-1}\over \beta_{2n-1}}x_{2n-2}.$$
Let $\beta_{2n-1}=a\lambda_{2n-1}$ and
$$\beta_{2n}={\lambda_{2n}^2\over \beta_{2n+1}}+{\lambda_{2n-1}^2\over \beta_{2n-1}}.$$
Then
$$0=\lambda_{2n}x_{2n+2}+\lambda_{2n-2}x_{2n-2}.$$
Choose an increasing sequence $\lambda_{2n}$ so that every solution $u_{2n}$ of the last equation is square summable. 
Assume also that $\lambda_{2n}=\lambda_{2n+1}.$ Then
by (\ref{one}) we get
$$|x_{2n+1}|\le |a||x_{2n+2}|+{\lambda_{2n}\over \lambda_{2n+1}}|x_{2n}|=|a||x_{2n+2}|+|x_{2n}|.$$
Thus the sequence $x_n$ is square summable, i. e. the Jacobi matrix $J$ is not essentially selfadjoint.

On the other hand the Jacobi matrix $J',$ under assumption $\lambda_{2n}=\lambda_{2n+1},$ is essentially selfadjoint.
Indeed, the sequence $x_{2n-1}=0$ and $x_{2n}=(-1)^n$ satisfies
$J'x=0$ and it is not square summable.
\end{proof}
{\bf Remark.} Following the proof it is possible to construct the coefficients $\lambda_n$ and $\beta_n$ explicitly.
Let $\lambda_{2n+1}=\lambda_{2n}=q^n$ for $q>1.$ Then $\beta_{2n+1}=aq^{n}$ and $\beta_{2n}=a^{-1}[q^n+
q^{n-1}].$

The following lemma is straightforward but useful. 
\begin{lemma}\label{lemma} Consider a symmetric operator $A$ on a Hilbert space $\mathcal{H}.$ Let $\mathcal{H}_0$ be a finite dimensional subspace of $D(A)\subset \mathcal{H}$ and let $P_{\mathcal{H}_0}$ denote the orthogonal projection onto $\mathcal{H}_0.$  Define the operator $\tilde{A}:\mathcal{H}_0^\perp\to \mathcal{H}_0^\perp$ by
$$ \tilde{A}=(I-P_{\mathcal{H}_0})A(I-P_{\mathcal{H}_0}).$$
The operator $\tilde{A}$ is essentialy selfadjoint if and only if $A$ is essentially selfadjoint.
\end{lemma}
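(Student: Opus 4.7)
Write $B = I - P_{\mathcal{H}_0}$. The plan is to reduce the claim to the standard fact that bounded symmetric perturbations preserve essential selfadjointness. First, I would extend $\tilde A$ to $\tilde A' := BAB$ with domain $D(A)$, acting on all of $\mathcal{H}$. Since $B\eta = 0$ for $\eta \in \mathcal{H}_0$, the operator $\tilde A'$ decomposes as $\tilde A \oplus 0$ under the splitting $\mathcal{H} = \mathcal{H}_0^\perp \oplus \mathcal{H}_0$, and
$$
(\tilde A' + iI)(u + \eta) = (\tilde A + iI)u + i\eta, \qquad u \in D(A)\cap \mathcal{H}_0^\perp,\ \eta \in \mathcal{H}_0,
$$
so ${\rm Im}\,(\tilde A' + iI)$ is dense in $\mathcal{H}$ if and only if ${\rm Im}\,(\tilde A + iI)$ is dense in $\mathcal{H}_0^\perp$ (and similarly with $-i$). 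Hence $\tilde A'$ is essentially selfadjoint on $\mathcal{H}$ precisely when $\tilde A$ is essentially selfadjoint on $\mathcal{H}_0^\perp$, and it suffices to prove the equivalence for $A$ and $\tilde A'$.

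Next I would analyse the difference
$$
K = A - \tilde A' = P_{\mathcal{H}_0}A P_{\mathcal{H}_0} + P_{\mathcal{H}_0}AB + BAP_{\mathcal{H}_0},
$$
regarded as an operator on $D(A)$. Symmetry of $K$ on $D(A)$ follows immediately from symmetry of $A$ and the inclusion $\mathcal{H}_0 \subset D(A)$. The crucial step is to show that $K$ is also \emph{bounded}. Fix an orthonormal basis $e_1,\ldots,e_n$ of $\mathcal{H}_0$; by symmetry of $A$, for any $u \in D(A)$,
$$
P_{\mathcal{H}_0}Au = \sum_{j=1}^n \langle Au, e_j\rangle\, e_j = \sum_{j=1}^n \langle u, Ae_j\rangle\, e_j,
$$
so $\|P_{\mathcal{H}_0}Au\| \le \|u\|\bigl(\sum_{j=1}^n \|Ae_j\|^2\bigr)^{1/2}$. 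The restriction $A|_{\mathcal{H}_0}$ is automatically bounded as $\mathcal{H}_0$ is finite-dimensional, so combining the two estimates gives $\|Kv\| \le C\|v\|$ for all $v \in D(A)$, and $K$ extends to a bounded selfadjoint operator on $\mathcal{H}$.

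Finally, $A = \tilde A' + K$ on $D(A)$ with $K$ bounded and selfadjoint, and the standard identities $\overline{T+K} = \bar T + K$ and $(T+K)^* = T^* + K$ for everywhere-defined bounded selfadjoint $K$ imply that $A$ is essentially selfadjoint if and only if $\tilde A'$ is, which by the first step is equivalent to essential selfadjointness of $\tilde A$. The main technical obstacle is showing that $K$ is bounded; this rests precisely on using the symmetry of $A$ to transfer the unbounded operator from $u$ onto the finitely many fixed vectors $Ae_j$ coming from the finite-dimensional subspace $\mathcal{H}_0$.
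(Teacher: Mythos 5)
Your proof is correct and complete: the reduction of $\tilde A$ to $\tilde A'=BAB$ on all of $\mathcal H$, the use of symmetry together with $\mathcal H_0\subset D(A)$ to show that $K=A-\tilde A'$ is a bounded symmetric (hence boundedly selfadjointly extendable) operator, and the appeal to stability of essential selfadjointness under bounded selfadjoint perturbations are all sound, and the finite-dimensionality of $\mathcal H_0$ is used exactly where it is needed. The paper itself states this lemma without proof, calling it ``straightforward but useful,'' so there is no argument to compare against; yours is the standard one and fills that gap.
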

\begin{thm}
Assume $J$ is not essentially selfadjoint. Fix a vertex $x_0$ on the level zero and a nonreal number $z.$
Let a  function $u(x)$ satisfy $u\neq 0,$ $u(x_0)=0$ and $$Ju(x)=zu(x),\quad x\neq x_0.$$
Then $u$ is square summable on $\Gamma.$
\end{thm}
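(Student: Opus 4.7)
The plan is to reduce the problem to a Jacobi matrix on the tree obtained by deleting the leaf $x_0$, and then invoke the analogue of Corollary~3 there. First I would set $\mathcal{H}_0=\mathbb{C}\delta_{x_0}$, a one-dimensional subspace of $D(J)$, and let $P_{\mathcal{H}_0}$ be the orthogonal projection onto it. By Lemma~\ref{lemma}, the compressed operator $\tilde{J}=(I-P_{\mathcal{H}_0})J(I-P_{\mathcal{H}_0})$ acting on $\mathcal{H}_0^\perp$ is not essentially selfadjoint, because $J$ is not.

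Identifying $\mathcal{H}_0^\perp$ with $\ell^2(\Gamma')$ where $\Gamma'=\Gamma\setminus\{x_0\}$ turns $\tilde{J}$ into the Jacobi matrix $J'$ on $\Gamma'$ obtained from $J$ by removing the row and column indexed by $x_0$. Because $x_0$ sits on level zero it is a leaf, so $\Gamma'$ is still a connected tree with one end at infinity; the only change is that $N_{x_0'}$ loses the element $x_0$ (and possibly becomes empty). The machinery of Section~2 therefore applies to $J'$ on $\Gamma'$, and in particular the analogues of Corollaries~2 and~3 supply a function $w\in\ell^2(\Gamma')$, $w\neq 0$, with $J'w=zw$, unique up to a constant multiple among nonzero formal eigenfunctions.

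The last step is to recognize $u|_{\Gamma'}$ as a nonzero formal eigenfunction of $J'$ with eigenvalue $z$. Nonvanishing follows from $u\neq 0$ together with $u(x_0)=0$. For any $x\in\Gamma'$, the only possible discrepancy between $J'u|_{\Gamma'}(x)$ and $Ju(x)$ is the term $\lambda_{x_0}u(x_0)$ which appears when $x=x_0'$, and which vanishes; hence $J'u|_{\Gamma'}=zu|_{\Gamma'}$. Uniqueness then forces $u|_{\Gamma'}=cw$ for some constant $c$, and since $u(x_0)=0$, this yields $u\in\ell^2(\Gamma)$.

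The subtlety I anticipate is checking that the inductions of Lemmas~1--3 really transfer to $\Gamma'$, whose vertex $x_0'$ may have empty neighborhood below while still satisfying $\ell(x_0')\ge 1$. This should be merely cosmetic: those inductions never used $N_x\neq\emptyset$ at positive levels, and in the degenerate case one may simply relabel levels so that $x_0'$ becomes a level-zero leaf of $\Gamma'$.
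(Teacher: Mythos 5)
Your proposal is correct and follows essentially the same route as the paper: pass to $\tilde{J}=(I-P_{\mathcal{H}_0})J(I-P_{\mathcal{H}_0})$ on $\Gamma\setminus\{x_0\}$ via Lemma~\ref{lemma}, observe that the truncation of $u$ is a nonvanishing formal eigenfunction there, and use the existence of a square-summable eigenfunction (since $\tilde{J}$ is not essentially selfadjoint) together with uniqueness up to a constant multiple to conclude. Your closing remark about $N_{x_0'}$ possibly becoming empty is a reasonable extra check that the paper leaves implicit, and your resolution of it is fine.
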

\begin{proof}
Let $\mathcal{H}_0=\mathbb{C}\delta_{x_0}.$ The operator $\tilde{J}$ acts on $\ell^2(\Gamma\setminus\{x_0\})$ and is not essentially selfadjoint by Lemma \ref{lemma}. Moreover if $\tilde{u}$ denotes the truncation of $u$ to $\tilde{\Gamma}=\Gamma\setminus\{x_0\}$
we have
$$ \tilde{J}\tilde{u}(x)=z\tilde{u}(x),\quad x\in\tilde{\Gamma}.$$
By Lemma 1 we know that $\tilde{u}$ cannot vanish. Since  $\tilde{J}$ is not essentially selfadjoint there exists a function $0\neq\tilde{v}\in \ell^2(\tilde{\Gamma})$ so that
$$ \tilde{J}\tilde{v}(x)=z\tilde{v}(x),\quad x\in\tilde{\Gamma}.$$
By Lemma 3 applied to $\tilde{\Gamma}$ we get that $\tilde{u}(x)=c\tilde{v}(x)$ for $x\in\tilde{\Gamma}.$
\end{proof}
Fix an infinite path $\{x_n\}$ so that $l(x_n)=n.$ By Corollary 2, for a nonreal number $z$ there exist two nonzero solutions $v_z$ and $u_z$ on $\Gamma$ such that 
\begin{align}
&v_z(x_0)=1, \ v_z(x_1)={z-\beta_{x_0}\over \lambda_{x_0}},\quad u_z(x_0)=0,\ u_z(x_1)={1\over \lambda_{x_0}}\\ 
&Jv_z(x)=zv_z(x),\ Ju_z(x)=zu_z(x),\quad x\in \Gamma\setminus\{x_0\}.
\end{align}
Observe that we have $$Jv_z(x)=zv_z(x),\quad  {\rm for}\ x\in \Gamma.$$
The functions $v_z$ and $u_z$ satisfying (6) and (7) will be called  {\it the solution} and  {\it the associated solution} of the equation
$$Jf(x)=zf(x),\quad x\in \Gamma\setminus\{x_0\}.$$
Summarizing we get
\begin{prop} Let $J$ be nonessentially selfadjoint Jacobi matrix on $\Gamma.$ Fix a vertex $x_0$ on the level $0.$
For any nonreal number $z$ every solution of the equation
$$zv(x)=\lambda_xv(x')+\beta_xv(x)+\sum_{y\in N_x}\lambda_yv(y),\qquad x\neq x_0$$
is square summable.
\end{prop}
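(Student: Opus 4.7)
The plan is to decompose an arbitrary solution of the relaxed equation into a piece governed by Corollary 3 and a piece governed by Theorem 4, then verify that both pieces lie in $\ell^2(\Gamma)$.

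First I would recall the two distinguished solutions. By Corollary 2 (applied to $\Gamma$), $v_z$ is the unique, up to scalar multiple, nonzero function on $\Gamma$ with $Jv_z=zv_z$, normalized by $v_z(x_0)=1$. By Corollary 3, the assumption that $J$ is not essentially selfadjoint gives $v_z\in\ell^2(\Gamma)$. The associated solution $u_z$ satisfies $u_z(x_0)=0$ and $Ju_z(x)=zu_z(x)$ for $x\neq x_0$; since $u_z\neq 0$ (its value at $x_1$ is $1/\lambda_{x_0}\neq 0$), Theorem 4 applies and gives $u_z\in\ell^2(\Gamma)$.

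Next, I would take an arbitrary solution $v$ of the equation on $\Gamma\setminus\{x_0\}$ and set
$$w=v-v(x_0)\,v_z.$$
Then $w(x_0)=0$ and $Jw(x)=zw(x)$ for every $x\neq x_0$, because $v_z$ satisfies the equation at every vertex (including $x_0$). If $w\equiv 0$, then $v=v(x_0)v_z\in\ell^2(\Gamma)$ and we are done. Otherwise Theorem 4 yields $w\in\ell^2(\Gamma)$, and combining with $v_z\in\ell^2(\Gamma)$ we get $v=v(x_0)v_z+w\in\ell^2(\Gamma)$.

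The only point that requires a moment of care is the implicit uniqueness statement: any solution of the relaxed equation is determined by $v(x_0)$ together with one additional parameter, so the two solutions $v_z$ and $u_z$ span the solution space and the decomposition above is actually well defined (equivalently, that $w$ truly is a multiple of $u_z$, as guaranteed by the argument in the proof of Theorem 4 via Lemma 3 applied to $\tilde\Gamma=\Gamma\setminus\{x_0\}$). I do not expect any serious obstacle beyond this bookkeeping; the real content has been done in Theorem 4 and Corollary 3, and Proposition 3 is essentially their linear combination.
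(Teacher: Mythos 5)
Your argument is correct and is exactly the reasoning the paper intends: the proposition is stated with only the remark ``Summarizing we get,'' i.e.\ it is the combination of the square summability of $v_z$ (from the corollary characterizing essential selfadjointness via $v\notin\ell^2(\Gamma)$) with the theorem that any nonzero solution vanishing at $x_0$ is square summable, glued together by the decomposition $v=v(x_0)v_z+w$ that you write down explicitly. Your handling of the $w\equiv 0$ case and the observation that $w$ need not be identified with a multiple of $u_z$ (the theorem applies to any such $w$ directly) are both fine.
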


Consider the graph obtained from $\Gamma$ by removing all links $(x_n,x_{n+1})$ (we do not remove vertices).
This graph splits into the infinite sum of finite subtrees $\Gamma_n.$ The tree $\Gamma_n$ contains the vertex $x_n.$ Moreover
$x_n$ is the only vertex of $\Gamma_n$ on the level $n.$
\begin{lemma}
Let $x\in \Gamma_n,$ for some $n\ge 1.$ Then $v_z(x_{n})u_z(x)=u_z(x_{n})v_z(x).$ 
\end{lemma}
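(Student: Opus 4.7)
The plan is to introduce the Wronskian-like function $w(x):=v_z(x_n)u_z(x)-u_z(x_n)v_z(x)$ and show that $w$ vanishes identically on $\Gamma_n$; this is exactly the asserted identity. By construction $w(x_n)=0$, and the recurrence $Jw(x)=zw(x)$ is inherited at every $x\neq x_0$ because $v_z$ and $u_z$ both satisfy $Jf=zf$ there. So the entire problem reduces to propagating the single boundary value $w(x_n)=0$ through the finite subtree $\Gamma_n$, while keeping the one exceptional vertex $x_0$ (at which $u_z$ violates $Jf=zf$) out of the picture.

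The key combinatorial observation is that, for $n\geq 1$, none of $x_0,x_1,\ldots,x_{n-1}$ lies in $\Gamma_n$. Indeed the unique $\Gamma$-path from any such $x_k$ to $x_n$ is $x_k,x_{k+1},\ldots,x_n$, and every one of its edges is removed when passing from $\Gamma$ to the components $\Gamma_m$. In particular $x_0\notin\Gamma_n$, so $Jw(x)=zw(x)$ holds at every $x\in\Gamma_n\setminus\{x_n\}$; moreover for such $x$ both $x'$ and all members of $N_x$ remain in $\Gamma_n$, so the recurrence closes up internally. This is also where I would record the useful disjoint decomposition $\Gamma_n=\{x_n\}\cup\bigcup_{y\in N_{x_n}\setminus\{x_{n-1}\}}\Gamma_y$.

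To finish I would handle each subtree $\Gamma_y$ in that decomposition separately. Fix $y\in N_{x_n}\setminus\{x_{n-1}\}$ and let $\tilde w$ denote the restriction of $w$ to $\Gamma'_y=\Gamma_y\cup\{y'\}=\Gamma_y\cup\{x_n\}$, viewed as an element of $\mathcal F(\Gamma'_y)$ (its support is finite because $\Gamma_y$ is). Then $\tilde w(y')=w(x_n)=0$ and $J\tilde w(x)=z\tilde w(x)$ for every $x\in\Gamma_y$, so the contrapositive of Lemma 1, with $y$ playing the role of $x$, forces $\tilde w\equiv 0$. Hence $w$ vanishes throughout $\Gamma_y$, and letting $y$ range over $N_{x_n}\setminus\{x_{n-1}\}$ yields $w\equiv 0$ on $\Gamma_n$, as required.

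I do not foresee a real obstacle: the only point that demands care is the combinatorial fact that the infinite path $\{x_k\}$ is disjoint from $\Gamma_n$, which is what allows us to treat $u_z$ as a bona fide solution of $Jf=zf$ throughout $\Gamma_n\setminus\{x_n\}$ and to invoke Lemma 1 without worrying about the defect at $x_0$. Once that verification is in place, Lemma 1 does the rest mechanically.
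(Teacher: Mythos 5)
Your proof is correct and is essentially the paper's argument: the paper notes that $u_z$ and $v_z$ both solve $Jf=zf$ on $\Gamma_n\setminus\{x_n\}$ and invokes the uniqueness clause of Lemma 3 to get $v_z=cu_z$ on $\Gamma_n$, and that uniqueness clause is itself proved by exactly your device of subtracting a suitably normalized multiple and applying Lemma 1 to the difference. Your version merely inlines that step (and your decomposition $\Gamma_n=\{x_n\}\cup\bigcup_{y\in N_{x_n}\setminus\{x_{n-1}\}}\Gamma_y$ with Lemma 1 applied branch by branch is a slightly more careful way of justifying the use of Lemma 3 on the component $\Gamma_n$, which is not literally a subtree of the form $\Gamma_{x}$).
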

\begin{proof}
By Lemma 1 we know that $v_z$ and $u_z$ cannot vanish. Both functions satisfy $Ju_z(x)=zu_z(x),$ $Jv_z(x)=zv_z(x)$ for $x\in \Gamma_n\setminus\{x_n\}.$ 
By Lemma 3  we get  $v_z(x)=cu_z(x)$ for $x\in \Gamma_n.$ Plugging in $x=x_n$ gives the conclusion.
\end{proof}
\begin{prop}
For the solution $v_z$ and the associated solution $u_z$ we have
$$\begin{vmatrix}
v_z(x_n) & u_z(x_n)\\
v_z(x_{n+1}) &u_z(x_{n+1})
\end{vmatrix} = {1\over \lambda_{x_n}}.
$$
\end{prop}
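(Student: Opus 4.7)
The plan is to prove the identity by induction on $n$, with the base case $n=0$ following directly from the initial conditions (6) and the induction step driven by the eigenvalue equation at $x_n$ together with the Wronskian-cancellation afforded by the preceding lemma.

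For the base case, I would simply substitute $v_z(x_0)=1$, $u_z(x_0)=0$, $u_z(x_1)=1/\lambda_{x_0}$ into the determinant; the off-diagonal zero kills the second term and the result is $1/\lambda_{x_0}$. Denote $W_n := v_z(x_n)u_z(x_{n+1})-u_z(x_n)v_z(x_{n+1})$, so the claim is $\lambda_{x_n}W_n = 1$ for every $n\ge 0$, and the base case is $\lambda_{x_0}W_0 = 1$.

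For the induction step, I would fix $n\ge 1$ and combine the two relations $Jv_z(x_n)=zv_z(x_n)$ and $Ju_z(x_n)=zu_z(x_n)$ (both hold because $x_n\ne x_0$). Multiplying the first by $u_z(x_n)$, the second by $v_z(x_n)$, and subtracting, the $\beta_{x_n}$-term cancels and I obtain
\begin{equation*}
\lambda_{x_n}W_n \;=\; \sum_{y\in N_{x_n}} \lambda_y\bigl[v_z(y)u_z(x_n)-u_z(y)v_z(x_n)\bigr].
\end{equation*}
The key observation is then that every $y\in N_{x_n}$ other than $x_{n-1}$ belongs to the subtree $\Gamma_n$ (since the link removed above $x_n$ is $(x_n,x_{n+1})$ and the one removed below is $(x_{n-1},x_n)$). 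By the preceding lemma, $v_z(x_n)u_z(y)=u_z(x_n)v_z(y)$ for every $y\in \Gamma_n$, so each such summand vanishes. Only the $y=x_{n-1}$ contribution survives, giving
\begin{equation*}
\lambda_{x_n}W_n \;=\; \lambda_{x_{n-1}}\bigl[v_z(x_{n-1})u_z(x_n)-u_z(x_{n-1})v_z(x_n)\bigr] \;=\; \lambda_{x_{n-1}}W_{n-1}.
\end{equation*}
Combined with the base case this yields $\lambda_{x_n}W_n = \lambda_{x_0}W_0 = 1$ for all $n$, which is the stated identity.

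The only subtle point is making sure the previous lemma applies to all the required neighbours of $x_n$, i.e.\ confirming that $N_{x_n}\setminus\{x_{n-1}\}\subset \Gamma_n$; this is immediate from the description of the components of $\Gamma$ after removal of the links $(x_k,x_{k+1})$. Everything else is a one-line Wronskian-type manipulation, so I do not expect any real obstacle.
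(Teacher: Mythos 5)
Your proposal is correct and follows essentially the same route as the paper: cross-multiply the two eigenvalue equations at $x_n$, cancel the $\beta_{x_n}$-term, kill the contributions from $y\in N_{x_n}\setminus\{x_{n-1}\}$ via the preceding lemma (since those $y$ lie in $\Gamma_n$), and telescope $\lambda_{x_n}W_n=\lambda_{x_{n-1}}W_{n-1}$ down to the base case $\lambda_{x_0}W_0=1$. The only difference is presentational (explicit induction versus iterated identity), so there is nothing to add.
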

\begin{proof}
By the recurrence relation (\ref{recur}) we get
\begin{eqnarray*}
\lambda_{x_n}v_z(x_{n+1})&=& zv_z(x_n)-\beta_{x_n}v_z(x_n)-\lambda_{x_{n-1}}v_z(x_{n-1})-\sum_{y\in N_{x_n}\setminus \{x_{n-1}\}} \lambda_yv_z(y),\\
\lambda_{x_n}u_z(x_{n+1})&=& zu_z(x_n)-\beta_{x_n}u_z(x_n)-\lambda_{x_{n-1}}u_z(x_{n-1})-\sum_{y\in N_{x_n}\setminus \{x_{n-1}\}} \lambda_yu_z(y).
\end{eqnarray*} 
On multiplying the equations by $u_z(x_n)$ and $v_z(x_n),$ respectively, subtracting sidewise and making use of Lemma 6 gives
$$\lambda_{x_n}\begin{vmatrix}
v_z(x_n) & u_z(x_n)\\
v_z(x_{n+1}) &u_z(x_{n+1})
\end{vmatrix} = \lambda_{x_{n-1}}\begin{vmatrix}
v_z(x_{n-1}) & u_z(x_{n-1})\\
v_z(x_{n}) &u_z(x_{n})
\end{vmatrix}
$$
The conclusion follows as
$$\lambda_{x_0}\begin{vmatrix}
v_z(x_0) & u_z(x_0)\\
v_z(x_{1}) &u_z(x_{1})
\end{vmatrix}=1.$$
\end{proof}
\begin{thm}
Let $J$ be a Jacobi matrix associated with the coefficients $\lambda_x$ and $\beta_x.$  Let $x_n$ denote any infinite path so that $l(x_n)=n.$ Assume
$$\sum_{n=1}{1\over \lambda_{x_n}} =\infty.$$ Then the operator $J$ is essentially selfadjoint.
\end{thm}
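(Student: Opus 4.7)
The plan is to prove the contrapositive: assume $J$ is not essentially selfadjoint and derive that $\sum_n 1/\lambda_{x_n} < \infty$. The whole argument is a direct combination of the Wronskian identity of Proposition 6 with the square-summability of \emph{both} the solution and the associated solution, which Proposition 4 guarantees in the nonselfadjoint case.

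Concretely, fix a nonreal $z$ and choose the vertex $x_0$ on level $0$ lying on the prescribed path. Let $v_z$ be the solution and $u_z$ the associated solution, as defined just before Proposition 4. Since $J$ is assumed not essentially selfadjoint, Proposition 4 tells us that every solution of $Jf=zf$ (holding at all vertices different from $x_0$) is square summable on $\Gamma$; applying this both to $v_z$ and to $u_z$, we get $v_z,u_z\in\ell^2(\Gamma)$. Restricting to the path, the sequences $\{v_z(x_n)\}_{n\ge 0}$ and $\{u_z(x_n)\}_{n\ge 0}$ are therefore in $\ell^2(\mathbb{N})$.

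By Proposition 6,
\[
\frac{1}{\lambda_{x_n}} \;=\; v_z(x_n)u_z(x_{n+1}) - v_z(x_{n+1})u_z(x_n),
\]
so the triangle inequality gives
\[
\frac{1}{\lambda_{x_n}} \;\le\; |v_z(x_n)|\,|u_z(x_{n+1})| + |v_z(x_{n+1})|\,|u_z(x_n)|.
\]
Summing in $n$ and applying the Cauchy--Schwarz inequality to each of the two series produces
\[
\sum_{n=1}^\infty \frac{1}{\lambda_{x_n}} \;\le\; 2\,\|v_z\|_{\ell^2(\Gamma)}\,\|u_z\|_{\ell^2(\Gamma)} \;<\; \infty,
\]
which contradicts the standing hypothesis $\sum 1/\lambda_{x_n}=\infty$. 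Hence $J$ must be essentially selfadjoint.

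There is no real obstacle here beyond recognising that the two key preceding results — the existence of the Wronskian-type determinant identity along the path and the simultaneous square-summability of $v_z$ and $u_z$ — combine exactly as in the classical Carleman argument for Jacobi matrices on $\mathbb{N}_0$. The only point that requires a moment of care is checking that Proposition 4 really applies to $u_z$ as well as to $v_z$: this is the case because $u_z$ satisfies $Ju_z(x)=zu_z(x)$ for all $x\neq x_0$ (by its defining property (7)), which is precisely the hypothesis of Proposition 4.
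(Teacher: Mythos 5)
Your argument is correct and is exactly the ``standard argument'' the paper invokes: the non--essential-selfadjointness hypothesis makes both $v_z$ and $u_z$ square summable (the paper's Proposition on square summability of all solutions away from $x_0$), and the Wronskian-type determinant identity along the path together with Cauchy--Schwarz then forces $\sum_n \lambda_{x_n}^{-1}<\infty$. The only discrepancy is cosmetic: your references to ``Proposition 4'' and ``Proposition 6'' correspond to the paper's Propositions 3 and 4 respectively, but the results you cite are the right ones and you apply them exactly as the paper intends.
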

\begin{proof}
The result follows by the standard argument from Proposition 4. If $J$ was not essentially selfadjoint then 
the functions $v$ and $u$ would be square summable, thus the series $\sum \lambda_{x_n}^{-1} $ would be summable.
\end{proof}

\noindent {\bf Remark.} The assumption does not depend on the choice of the infinite path, as any two such paths will meet at a certain vertex.

\section{Nonnegative Jacobi matrices on trees}
We say that a matrix $J$ is positive definite if
$$(Jv,v)\ge 0,\quad v\in {\mathcal F}(\Gamma).$$
The next theorem gives characterization of positive definite Jacobi matrices on $\Gamma.$
\begin{thm} 
\begin{enumerate}
\item[(i)]
 Assume there exists  a positive function $m(x)$ on $\Gamma$ such that 
\begin{equation}\label{positive}
\beta_xm(x)\ge  \lambda_xm(x')+\sum_{y\in N_x} \lambda_ym(y),\qquad x\in \Gamma.
\end{equation} 
Then the matrix $J$ is positive definite
\item[(ii)] If
the matrix $J$ is positive definite there exists a positive function $m(x)$ on $\Gamma$ such that 
\begin{equation}\label{positive1}
\beta_xm(x)= \lambda_xm(x')+\sum_{y\in N_x} \lambda_ym(y),\qquad x\in \Gamma.
\end{equation} 
\end{enumerate}
\end{thm}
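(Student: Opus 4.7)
For (i) the approach is a direct quadratic-form manipulation. Expanding
\[
(Jv,v)=\sum_x\beta_x|v(x)|^2+2\sum_x\lambda_x\operatorname{Re}\bigl(v(x)\overline{v(x')}\bigr)
\]
(after grouping each parent--child edge into a single real cross term), I would apply the AM--GM estimate
\[
2\operatorname{Re}\bigl(v(x)\overline{v(x')}\bigr)\ge -\frac{m(x')}{m(x)}|v(x)|^2-\frac{m(x)}{m(x')}|v(x')|^2
\]
and reindex the sum $\sum_x\lambda_x\frac{m(x)}{m(x')}|v(x')|^2$ by the parent vertex, converting it into $\sum_x\frac{|v(x)|^2}{m(x)}\sum_{y\in N_x}\lambda_ym(y)$. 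Combining with the diagonal term yields
\[
(Jv,v)\ge\sum_x\frac{|v(x)|^2}{m(x)}\Bigl(\beta_xm(x)-\lambda_xm(x')-\sum_{y\in N_x}\lambda_ym(y)\Bigr)\ge 0
\]
by (\ref{positive}).

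For (ii) I would build $m$ via a Schur-complement recursion on the truncations $J_x$. Introduce inductively on $\ell(x)$ the quantities
\[
\sigma_x=\beta_x\ \ (\ell(x)=0),\qquad \sigma_x=\beta_x-\sum_{y\in N_x}\frac{\lambda_y^2}{\sigma_y}\ \ (\ell(x)\ge 1).
\]
Writing $J_x$ in block form with principal block $A=\bigoplus_{y\in N_x}J_y$ and coupling column $b=\sum_{y\in N_x}\lambda_y\delta_y$ to the coordinate $x$, direct computation shows that $\sigma_x$ is the Schur complement of $J_x$ at $x$ and that $(J_x^{-1})_{x,x}=1/\sigma_x$ as long as all the involved $\sigma_y$ are positive.

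The heart of the argument is to show $\sigma_x>0$ at every vertex, by induction on $\ell(x)$. For a leaf, the test vector $v=\delta_x+c\delta_{x'}$ gives $(Jv,v)=\beta_x+2\lambda_x\operatorname{Re}(c)+\beta_{x'}|c|^2$, and positive definiteness on all $c\in\mathbb{C}$ forces $\beta_x\beta_{x'}\ge\lambda_x^2$, hence $\beta_x>0$. For $\ell(x)\ge 1$ the inductive hypothesis makes $A$ strictly positive, and the vector $v_0=\delta_x-A^{-1}b\in\ell^2(\Gamma_x)$ satisfies $J_xv_0=\sigma_x\delta_x$, so $(J_xv_0,v_0)=\sigma_x$. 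Extending $v_0$ by zero to $\Gamma$, one also has $(Jv_0,v_0)=\sigma_x$ and $Jv_0(x')=\lambda_xv_0(x)=\lambda_x$; hence
\[
\bigl(J(v_0+\varepsilon\delta_{x'}),v_0+\varepsilon\delta_{x'}\bigr)=\sigma_x+2\varepsilon\lambda_x+\varepsilon^2\beta_{x'}.
\]
If $\sigma_x\le 0$, the right-hand side is strictly negative either already at $\varepsilon=0$ (when $\sigma_x<0$) or for small $\varepsilon$ of the sign opposite to $\lambda_x$ (when $\sigma_x=0$), contradicting $J\ge 0$. I expect the $\sigma_x=0$ case to be the main obstacle, since the contradiction genuinely requires the edge from $x$ to $x'$ that has been erased in the truncation $J_x$.

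Finally, with $\sigma_x>0$ everywhere, set $r_y=\lambda_y/\sigma_y\in(0,\infty)$; fix any $x_0\in\Gamma$, put $m(x_0)=1$, and propagate along tree edges by $m(y)=r_ym(y')$. Since $\Gamma$ is a tree, $m$ is well defined and strictly positive. The identities $\lambda_x/r_x=\sigma_x$ and $\sum_{y\in N_x}\lambda_yr_y=\sum_{y\in N_x}\lambda_y^2/\sigma_y=\beta_x-\sigma_x$ then verify $\beta_xm(x)=\lambda_xm(x')+\sum_{y\in N_x}\lambda_ym(y)$ at every vertex, completing (ii).
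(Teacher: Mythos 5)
Your proof of part (i) is essentially the paper's: the author introduces $\alpha_x=\lambda_x m(x)/m(x')$ and $\gamma_x=\lambda_x m(x')/m(x)$ and carries out exactly your AM--GM estimate on the cross terms followed by the reindexing over parents, so there is nothing to compare there. For part (ii), however, you take a genuinely different and, as far as I can check, correct route. The paper conjugates by the unitary $Uv(x)=(-1)^{\ell(x)}v(x)$ to flip the sign of the off-diagonal part, truncates to the finite subtrees $\Gamma_{x_n}$, expands the resolvent $(\tfrac1n I-\tilde J_n)^{-1}\delta_{x_0}$ as a Neumann series to obtain strictly positive vectors $f_n$, normalizes, extracts a pointwise accumulation point $m$, and then --- because this limit only yields an inequality at the root $x_0$ --- repairs it along a distinguished path using the classical fact that $(-1)^np_n(0)>0$ for a positive definite classical Jacobi matrix. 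Your argument instead runs the tree Cholesky elimination: the pivots $\sigma_x$ are shown positive by induction on the level, with the test vector $v_0+\varepsilon\delta_{x'}$ disposing of the borderline case $\sigma_x=0$ (which genuinely must be addressed, since the paper's ``positive definite'' is only semidefiniteness), and $m$ is then assembled from the ratios $\lambda_y/\sigma_y$, satisfying the equality at every vertex with no correction step. Your route is more elementary and self-contained --- no compactness or accumulation-point argument, no appeal to orthogonal polynomial theory --- and it produces the exact identity everywhere at once; the paper's resolvent argument is the one that would survive in settings where vertex-by-vertex elimination is unavailable, and it exhibits $m$ as a limit of sign-corrected resolvent vectors, which has independent interest. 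In a written-up version you should make explicit the two facts you use tacitly: that positivity of all pivots $\sigma_t$ for $t\in\Gamma_y$ gives strict positive definiteness, hence invertibility, of the truncation $J_y$, and that the block-diagonal structure of $A=\bigoplus_{y\in N_x}J_y$ is what collapses $b^*A^{-1}b$ to $\sum_{y\in N_x}\lambda_y^2/\sigma_y$.
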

\begin{proof}
(i). 
For $x\in \Gamma$ let
$$ \alpha_x=\lambda_x{m(x)\over m(x')},\quad \gamma_x=\lambda_x{m(x')\over m(x)}.$$
Thus, on dividing by $m(x),$ the formula (\ref{positive})  takes the form
\begin{equation}\label{beta}
\beta_x\ge \gamma_x+\sum_{y\in N_x}\alpha_y,\qquad x\in \Gamma.
\end{equation}
We have (see (\ref{s}))
\begin{multline*}
(Jv,v)=((S+S^*+M )v,v)\\ =\sum_{x\in \Gamma}\beta_x|v(x)|^2+2{\rm Re}\,\sum_{x\in \Gamma}\lambda_x\overline{v(x)}v(x')\\ \ge 
\sum_{x\in \Gamma}\beta_x|v(x)|^2-2\sum_{x\in \Gamma}\lambda_x|v(x)|\,|v(x')|\\
=\sum_{x\in \Gamma}\beta_x|v(x)|^2-2\sum_{x\in \Gamma}\sqrt{\alpha_x\gamma_x}|v(x)|\,|v(x')|\\
\ge \sum_{x\in \Gamma}\beta_x|v(x)|^2-\sum_{x\in \Gamma}\gamma_x|v(x)|^2-\sum_{x\in \Gamma}\alpha_x|v(x')|^2\\ =
\sum_{x\in \Gamma}\beta_x|v(x)|^2-\sum_{x\in \Gamma}\gamma_x|v(x)|^2-\sum_{x\in \Gamma}|v(x)|^2\sum_{y\in N_x}\alpha_y \\ =
\sum_{x\in \Gamma}\left (\beta_x-\gamma_x-\sum_{y\in N_x}\alpha_y\right ) |v(x)|^2\ge 0.
\end{multline*}
(ii) 
Consider the operator $U$ acting by the rule
$$Uv(x)=(-1)^{l(x)}v(x).$$ Clearly $U$ is a unitary operator. Let $$\tilde{J}=-U^*JU.$$ 
Then $\tilde{J}$ is a nonpositive definite operator and $$\tilde{J}v(x)=\lambda_xv(x')-\beta_xv(x)+ \sum_{y\in N_x}\lambda_y v(y).$$
Fix an infinite path $x_n$ so that $l(x_n)=n.$ Thus $\Gamma=\displaystyle\bigcup_{n=0}^\infty \Gamma_{x_n}.$ Let $P_n$ denote
the orthogonal projection from $\ell^2(\Gamma)$ onto $\ell^2(\Gamma_{x_n})$
and  $\tilde{J}_n=P_n\tilde{J}P_n.$ Then $\tilde{J}_n$ is a bounded nonpositive linear operator.
Therefore $$-a_nI< \tilde{J}_n\le 0<\textstyle{1\over n} I,$$ for a positive constant $a_n.$ 
 Hence
$$0< \tilde{J}_n+a_nI<\left (a_n+\textstyle{1\over n}\right )I.$$
We have 
\begin{equation}\label{pos}
0< ((\tilde{J}_n+a_nI)\delta_x,\delta_x)=a_n-\beta_x,\qquad x\in \Gamma_{x_n}.
\end{equation}
Observe that
\begin{equation}\label{coef}
(\tilde{J}_n+a_nI)\delta_x=\lambda_x\delta_{x'}+(a_n-\beta_x)\delta_x+\sum_{y\in N_x}\lambda_y\delta_y,\qquad x\in \Gamma_{x_n}\setminus\{x_n\}.
\end{equation}
Let 
\begin{multline*}
f_n:=({\textstyle{1\over n}} I-\tilde{J}_n)^{-1}\delta_{x_0}=[(a_n+{\textstyle{1\over n}})I-(\tilde{J}_n+a_nI)]^{-1}\delta_{x_0}\\ =
\sum_{k=0}^\infty {1\over (a_n+{\textstyle{1\over n}})^{k+1}}(\tilde{J}_n+a_nI)^k\delta_{x_0}.
\end{multline*}
By (\ref{pos}) and (\ref{coef}) 
 the function
$$(\tilde{J}_n+a_nI)^k\delta_{x_0}$$ is nonnegative, and positive on all vertices of $\Gamma_{x_n}$ at distance from $x_0$ less or equal to $k.$  Hence $f_n\ge 0$ and $f_n(x)>0$ for any $x\in \Gamma_{x_n}.$ Moreover
$$\tilde{J}_nf_n=\tilde{J}_n( {\textstyle{1\over n}}I-\tilde{J})_n^{-1}\delta_{x_0}=(\tilde{J}_n-{\textstyle{1\over n}} I)({\textstyle{1\over n}} I-\tilde{J}_n)^{-1}\delta_{x_0}+{\textstyle{1\over n}} f_n={\textstyle{1\over n}} f_n-\delta_{x_0}.$$
This results in
$$\lambda_xf_n(x')-\beta_xf_n(x)+\sum_{y\in N_x}\lambda_y f_n(y)= {\textstyle{1\over n}} f_n(x)-\delta_{x_0}(x),\quad x\in \Gamma_{x_n}\setminus\{x_n\}.$$
Let $$m_n(x)={f_n(x)\over f_n(x_0)}.$$ 
Then $m_n(x_0)=1$ and 
\begin{align}\label{imp}\lambda_xm_n(x')+\sum_{y\in N_x}\lambda_y m_n(y)&= (\beta_x+{\textstyle{1\over n}})m_n(x),\  x\in \Gamma_{x_n}\setminus\{x_0,x_n\},\\
\lambda_{x_0}m_n(x_1)&\le  (\beta_{x_0}+{\textstyle{1\over n}}).\label{imp1}
\end{align}
Observe that for any fixed $t\in \Gamma$ the sequence $m_n(t)$ is bounded. Indeed, assume the opposite. Let $t$ be the vertex closest to $x_0,$ so that $m_n(t)$ is unbounded. Let $s$ be the vertex adjacent to $t,$ so that
$$d(x_0,t)=d(x_0,s)+1.$$  Then applying (\ref{imp}) with $x=s$ implies that the sequence $m_n(s)$ is unbounded, which gives a contradiction.

Observe also that for any fixed $t\in \Gamma$ the sequence $m_n(t)$ cannot accumulate at zero. Indeed, assume the opposite.
 Let $t$ be the vertex closest to $x_0$ so that $m_n(t)$ accumulates at zero. Again let  $s$ be the vertex adjacent to $t,$ so that
$$d(x_0,t)=d(x_0,s)+1.$$  Then applying (\ref{imp}) with $x=t$ implies that the sequence $m_n(s)$ also accumulates at zero, which gives a contradiction.

Consider the sequence of functions $m_n.$   Let $m$ be any pointwise accumulation point of this sequence.   Then $m(x)>0$ and by (\ref{imp}) and (\ref{imp1}) we obtain
\begin{align}\label{prep}\lambda_xm(x')+\sum_{y\in N_x}\lambda_ym(y)
 &=\beta_xm(x),\qquad  x\in\Gamma\setminus\{ x_0\},\\
\label{prep1}\lambda_{x_0}m(x_1)&\le  \beta_{x_0}.
 \end{align}
 
 In order to get the conclusion (i.e. to guarantee equality also in (\ref{prep1})) we have to modify  slightly the function $m(x).$ 
 
Observe that after removing all the edges from the  path $\{x_n\}$ the tree $\Gamma$ splits into the sequence
of disjoint trees $\Gamma_n$ so that $x_n\in \Gamma_n.$ By (\ref{prep}) evaluated at $x=x_n$  we have
$$\lambda_{x_n}m(x_{n+1})+\lambda_{x_{n-1}}m(x_{n-1})+\sum_{y\in N_{x_n}\atop y\neq x_{n-1}}\lambda_ym(y)= \beta_{x_n}m(x_n),\qquad n\ge 1.$$
Let the coefficients $c_n$ be defined by $c_0=0$ and
\begin{equation}\label{cn}\sum_{y\in N_{x_n}\atop y\neq x_{n-1}}\lambda_ym(y)=c_nm(x_n),\qquad n\ge 1.
\end{equation} Thus
$$\lambda_{x_n}m(x_{n+1})+\lambda_{x_{n-1}}m(x_{n-1})= (\beta_{x_n}-c_n)m(x_n), \qquad n\ge 1.$$
This implies $\beta_{x_n}\ge c_n.$
Consider the classical Jacobi matrix defined by
$$J_0u(n)=\lambda_{x_n}u(n+1)+(\beta_{x_n}-c_n)u(n)+\lambda_{x_{n-1}}u(n-1).$$
By Theorem 9(i) the matrix $J_0$ is positive definite. Let $p_n$ denote the orthogonal polynomials
associated with $J_0.$ By the well known result we have $(-1)^np_n(0)>0.$ Set $\tilde{m}(x_n)=(-1)^np_n(0).$
Then 
\begin{equation}\label{lam}\lambda_{x_n}\tilde{m}(x_{n+1})+\lambda_{x_{n-1}}\tilde{m}(x_{n-1})= (\beta_{x_n}-c_n)\tilde{m}(x_n).
\end{equation}
Set also 
\begin{equation}\label{til}\tilde{m}(y)={\tilde{m}(x_n)\over m(x_n)}m(y),\qquad y\in \Gamma_n.
\end{equation}
In view of  (\ref{cn}),  (\ref{lam}) and (\ref{til}) we get
$$\lambda_{x_n}\tilde{m}(x_{n+1})+\sum_{y\in N_{x_n}}\lambda_y\tilde{m}(y)=\beta_{x_n}\tilde{m}(x_n),\qquad n\ge 0.$$
Finally, by (\ref{prep}) and (\ref{til})   we have
$$\lambda_xm(x')+\sum_{y\in N_x}\lambda_ym(y)
 =\beta_xm(x),\quad  x\in \Gamma_n\setminus \{x_n\}, n\ge 1.$$
\end{proof}

\begin{prop}
There exist Jacobi matrices $J$ on trees so that the equation
$Jv=tv$ does not admit nonzero solutions for some real values of $t.$
\end{prop}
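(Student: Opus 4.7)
The approach leverages Theorem 9 to construct a positive definite Jacobi matrix, and then uses the positivity together with the branching of the tree to rule out formal solutions for a suitable real parameter $t$.

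First, I fix a tree $\Gamma$ with at least two children at every vertex of positive level (for concreteness, one may take the homogeneous binary tree with one end at infinity). Pick a positive function $m$ on $\Gamma$ and positive weights $\lambda_x$, and then \emph{define}
$$\beta_x=\frac{\lambda_x m(x')+\sum_{y\in N_x}\lambda_y m(y)}{m(x)}.$$
By Theorem 9(i), the resulting matrix $J$ is positive definite.

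Next, fix a real number $t<0$ and let $v$ be a formal solution of $Jv=tv$. Restricting the recurrence to the subtree $\Gamma_x$ for an arbitrary vertex $x$ gives
$$(J_x-tI)\,v\!\upharpoonright_{\Gamma_x}=-\lambda_x v(x')\,\delta_x,$$
because the only ``up'' term of $J$ absent in the truncation $J_x$ is precisely $\lambda_x v(x')$ at the top vertex $x$. Since $J_x\ge 0$ as a principal submatrix of $J\ge 0$ and $t<0$, the operator $J_x-tI$ is bounded below by $|t|I$ and therefore invertible on $\ell^2(\Gamma_x)$ with $\|(J_x-tI)^{-1}\|\le 1/|t|$. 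Consequently
$$\|v\!\upharpoonright_{\Gamma_x}\|_{\ell^2(\Gamma_x)}\le\frac{\lambda_x|v(x')|}{|t|}.$$

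I would then pick an infinite path $\{x_n\}$ with $l(x_n)=n$. Because $\Gamma$ has one end at infinity, any two such paths eventually coincide, so $\Gamma=\bigcup_n\Gamma_{x_n}$. Applying the bound above at $x=x_n$ and passing to the limit yields $\|v\|_{\ell^2(\Gamma)}\le\limsup_n \lambda_{x_n}|v(x_{n+1})|/|t|$. Careful selection of $m$ and the $\lambda_x$---using the branching of the tree to control the growth of $v(x_{n+1})$ via the recurrence and the estimate applied at the neighbors $y\in N_{x_n}$---then forces the right-hand side to be finite, i.e.\ $v\in\ell^2(\Gamma)$. Once this is established, $J-tI\ge|t|I$ on $\ell^2(\Gamma)$ implies $v=0$, completing the argument.

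The most delicate step is showing that a formal solution must belong to $\ell^2$. In the classical one-sided Jacobi setting (a linear tree, $d=1$) this would fail, since formal solutions outside $\ell^2$ always exist for every $t\in\mathbb{C}$; it is precisely the branching of $\Gamma$ at every level that yields the extra compatibility conditions needed to close the estimate. Theorem 9 supplies the flexibility required for the tuning: both $m$ and $\lambda_x$ may be prescribed freely, with $\beta_x$ then read off from the formula above. This is why the nonnegativity characterization is the central tool in the construction.
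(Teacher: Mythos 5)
Your approach diverges from the paper's (which works at $t=0$ and tunes the coefficients on each branch subtree $\Gamma_{y_k}$ so that the branch solution forces $v(x_{k+1})=0$, producing an over\-determined system with only the zero solution), and as written it has two genuine gaps. First, the step you yourself flag as delicate is not closed: the inequality $\|v\!\upharpoonright_{\Gamma_{x_n}}\|\le\lambda_{x_n}|v(x_{n+1})|/|t|$ has an uncontrolled right-hand side, and in fact the very same inequality, applied to the single coordinate $v(x_n)$, gives $|v(x_{n+1})|\ge |t|\,|v(x_n)|/\lambda_{x_n}$, so the boundary values along the path tend to grow rather than stay bounded. No concrete choice of $m$ and $\lambda_x$ is exhibited that makes $\limsup_n\lambda_{x_n}|v(x_{n+1})|$ finite, and it is not clear one exists; this is the entire content of the claim and cannot be left to ``careful selection.''

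Second, even granting $v\in\ell^2(\Gamma)$, the final inference fails. The inequality $((J-tI)u,u)\ge|t|\|u\|^2$ holds only for $u\in\mathcal{F}(\Gamma)$; a formal solution $v\in\ell^2$ is an eigenvector of the adjoint $J^*$, not of $J$, and nonnegativity of the form on finitely supported vectors does not force $(J^*v,v)\ge0$. Concretely, if the positive definite matrix $J$ you build is not essentially selfadjoint, then by Proposition 2 its deficiency index is $1$, every $t<0$ is a point of regular type for the nonnegative symmetric operator $J$ (since $\|(J-t)u\|\ge|t|\|u\|$ on $\mathcal{F}(\Gamma)$), and hence $\dim\ker(J^*-tI)=1$ for every $t<0$: a nonzero square-summable formal solution \emph{does} exist, exactly contradicting your conclusion. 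So your argument would additionally require $J$ to be essentially selfadjoint, and then it must reconcile ``every formal solution at the real point $t$ is in $\ell^2$'' with Corollary 3, which says the formal solution at nonreal $z$ is \emph{not} in $\ell^2$. The paper avoids all of this by using positivity only locally and finite-dimensionally (e.g.\ $\lambda_x=1$, $\beta_x=4$ on the branches) to guarantee that $0$ is not an eigenvalue of the truncations to the branch subtrees, so that the branch solutions are nonvanishing and unique up to scale; the contradiction is then purely combinatorial along the path, with no $\ell^2$ considerations.
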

\begin{proof} We may admit that $t=0.$  Consider a tree $\Gamma$ with $\#N_x=2$ for every vertex $x,$ $\ell(x)\ge 1.$ Fix an infinite path $x_n,$ so that $\ell(x_n)=n.$ Then $N_{x_n}=\{x_{n-1},y_{n-1}\}$ for $n \ge 1.$ We will define the coefficients $\lambda_x$ and $\beta_x$ on $\Gamma_{y_k}$ in such a way that the operator $J$ restricted to $\ell^2(\Gamma_{y_k}\setminus\{y_k\})$ is positive. For example we may set $\lambda_x=1$ and $\beta_x=4$ for any $x\in \Gamma_{y_k}\setminus\{y_k\}.$ In this way if the function $v$ satisfies 
$Jv(y)=0$ for $y\in \Gamma_{y_k}\setminus\{y_k\},$ then either $v=0$ or $v$ cannot vanish on $\Gamma_{y_k}.$ If $v$ does not vanish on  $\Gamma_{y_k}$ its restriction to  $\Gamma_{y_k}$ is unique up to a constant multiple. Let $\lambda_{y_k}=1$ and set $\beta_{y_k}$ in such a way that $v(y_k')=v(x_{k+1})=0.$ Set also $\lambda_{x_k}=1$ and $\beta_{x_k}=0$ for any $k.$ Thus the matrix $J$ is defined. Assume $Ju=0.$ If $u$ vanishes on every subtree $\Gamma_{y_n}$ then by the recurrence relation $u$ vanishes at every vertex $x_n,$ with $n\ge 1, $ as $y_n'=x_{n+1}.$ Moreover by the recurrence relation evaluated at $x_1$ we obtain $v(x_0)=0,$ i.e. $v=0.$ If $u$ does not vanish on every subtree $\Gamma_{y_n},$ let $n$ be the smallest index for which $u$ does not vanish on $\Gamma_{y_n}.$ 
\begin{center}
\begin{tikzpicture}
\draw[fill=black] (0,2) circle (1.5pt);
\draw[fill=black] (1.2,2) circle (1.5pt);
\draw[fill=black] (0.5,1) circle (1.5pt);
\draw[fill=black] (1,0) circle (1.5pt);
\draw[fill=black] (1.7,1) circle (1.5pt);
\node at (0,2.3) {$x_n$};
\node at (1.2,2.3) {$y_n$};
\node at (1,1) {$x_{n+1}$};
\node at (1.6,0) {$x_{n+2}$};
\node at (2.2,1) {$y_{n+1}$};
 \draw (0,2)--(0.5,1);
 \draw (1.2,2)--(0.5,1);
 \draw (1,0)--(0.5,1);
 \draw (1,0)--(1.7,1);
\end{tikzpicture}
\end{center}
Then $v(x_k)=0$ for any $k\le n.$ We must have $v(y_n)\neq 0.$  By construction we also get $v(x_{n+1})=0.$ By the recurrence relation evaluated at $x_{n+1}$ we conclude that $v(x_{n+2})\neq 0.$ This implies that $v$ does not vanish on $\Gamma_{y_{n+1}}.$ 
But by construction $v(x_{n+2})=0,$ which is a contradiction.
\end{proof}

\end{document}